\documentclass{amsart}

\usepackage[T1]{fontenc}
\usepackage[utf8]{inputenc}
\usepackage{amsmath,amsthm,amsfonts,amscd,amssymb,eucal,latexsym,mathrsfs}
\usepackage{stmaryrd}
\usepackage{enumerate}
\usepackage{hyperref}
\usepackage[all]{xy}
\usepackage{etoolbox}

\usepackage{tikz,tikz-cd}
\usetikzlibrary{shapes.geometric}
\usetikzlibrary{arrows}
\usepackage{tqft}

\usetikzlibrary{positioning}
\usepackage{float}
\usepackage{MnSymbol}
\usetikzlibrary{matrix}

\theoremstyle{plain}
\newtheorem{theorem}{Theorem}[section]
\newtheorem*{theorem*}{Theorem}

\newtheorem{corollary}[theorem]{Corollary}
\newtheorem*{corollary*}{Corollary}

\newtheorem{lemma}[theorem]{Lemma}

\newtheorem{proposition}[theorem]{Proposition}

\theoremstyle{definition}
\newtheorem{remark}[theorem]{Remark}

\newtheorem{definition}[theorem]{Definition}
\newtheorem*{definition*}{Definition}

\usetikzlibrary{calc,graphs}
\usepackage{xcolor}
\usetikzlibrary{arrows,decorations.pathmorphing}


\newcommand{\e}{\varepsilon}

\newcommand{\IR}{\mathbb{R}}

\newcommand{\ZI}{\mathbb{Z}}
\newcommand{\IN}{\mathbb{N}}

\newcommand{\SI}{\mathbb{S}}

\DeclareMathOperator{\del}{\partial}

\DeclareMathOperator{\Aut}{\mathrm{Aut}}

\DeclareMathOperator{\inj}{\hookrightarrow}

\DeclareMathOperator{\surj}{\twoheadrightarrow}

\DeclareMathOperator{\spn}{\mathrm{span}}

\newcommand{\Bord}{\mathrm{Bord}}

\DeclareMathOperator{\SAut}{\mathrm{SAut}}

\newcommand{\ts}{\textsection}

\newcommand{\ip}[1]{\langle#1\rangle} 


\newcommand{\dirlim}{\varinjlim}
\newcommand{\St}{\mathop{\mathrm{St}}}

\title{Surgery on $\mathbf{Aut}(F_2)$}

\author{Sylvain Barr\'e}
\author{Mika\"el Pichot}
\address{Sylvain Barr\'e, UMR 6205, LMBA, Université de Bretagne-Sud,BP 573, 56017, Vannes, France}\email{Sylvain.Barre@univ-ubs.fr}
\address{Mika\"el Pichot, McGill University, 805 Sherbrooke St W., Montr\'eal, QC H3A 0B9, Canada}\email{pichot@math.mcgill.ca}

\begin{document}
\begin{abstract}
We study a geometric construction of certain finite index subgroups of $\Aut(F_2)$.
\end{abstract} 

\maketitle

We recall that $\Aut(F_2)$ admits an isometric properly discontinuous action with compact quotient on a CAT(0) complex $X_0$, called the Brady complex, which was introduced in \cite{brady1994automatic}

In \ts \ref{S - 1}, we show that $\Aut(F_2)$ can be presented (virtually) in a very simple manner from a labelling of a flat torus.  
Starting from a torus of size $6\times n$, for some fixed integer $n\geq 1$ (we shall discuss the case $n=5$ in  details), we associate to it, via a ``pinching and (systolic) filling'' construction,  a 2-complex $B_n$, with fundamental group a group $G_n$ which is of finite index  in $\Aut(F_2)$.  
The universal cover $X_n$ of $B_n$ is  a CAT(0) space.  
  We show in \ts \ref{S - 1} that the $X_n$'s are pairwise isometric for every $n\geq 1$, and that  $X_0$ and $X_n$ are locally isometric, in the sense that their vertex links are pairwise isometric (Lemma \ref{L - link}) for every $n\geq 0$. This  implies, by the result below,  that  $X_n$ is isometric to $X_0$ for every $n$.

In \ts 2, we prove a geometric rigidity theorem for the Brady complex.   Roughly speaking, the result states that $X_0$ is the ``free complex'' on one (any) of its face, among the complexes locally isomorphic to $X_0$ (see Th.\ \ref{T - rigid} for a precise statement).  This seems to be a rather special property of $X_0$, which is not very often satisfied among the 2-complexes  we have studied. 

Theorem \ref{T - rigid} implies that every CAT(0) 2-complex locally isometric to $X_0$ is isometric to $X_0$.
The notion of local isomorphism in this statement is slightly more restrictive than requiring the existence of an abstract isometry between the links shown in Lemma \ref{L - link}: the two complexes must be of the same (local) type (see  \ts\ref{S - 2}). The additional conditions are however immediate to verify  for the $X_n$'s for $n\geq 1$. 

In \ts\ref{S - 1.5}, we show that every torsion free finite index orientable subgroup of $\Aut(F_2)$ can be constructed abstractly by a pinching--and--filling construction, similar to the one given in \ts\ref{S - 1}, applied to finitely many tori. It is not clear  however how to extend the explicit procedure given in \ts\ref{S - 1} to describe, e.g., the family of  torsion free finite index subgroups which are associated with a fixed number of tori.
 
In \ts\ref{S - 3}, we explain the origin of the toric presentation given in \ts\ref{S - 1}. 
The present paper can be seen as a continuation of an earlier work \cite{surg}, in which we introduce a cobordism category $\Bord_A$ which can be used  to construct groups acting on complexes of a given (local) type $A$. We show below that the  techniques of \cite{surg} can be applied to the case of $\Aut(F_2)$. It gives rise groups acting on complexes of type $\Aut(F_2)$ as defined in \ts\ref{S - 2}. 

In the case of $\Aut(F_2)$, however, the spaces  constructed by surgery in this way must, by the results in \ts\ref{S - 2}, be quotients of the Brady complex $X_0$, and the resulting fundamental groups, subgroups of $\Aut(F_2)$. This is not true of many cobordism categories, and contrasts for example with the categories studied in \cite{surg}, in which the groups accessible by surgery in a given category (of a fixed local type, e.g., Moebius--Kantor) are typically not  pairwise commensurable. Again, the category $\Bord_A$ is  rather special in this respect when $A$ is the type $\Aut(F_2)$.

Finally,  we give in \ts\ref{S - 4} an  example of a CAT(0) 2-complex $X'$ which is locally isomorphic to the Brady complex to $X_0$, but not isometrically isomorphic to it. Here ``locally isomorphic'' refers to  the fact that the links in  $X'$ are isometric to the links in the complex $X_0$.

\bigskip

\textbf{Acknowledgement.} The second author is supported by an NSERC discovery grant. 

\section{The toric presentation}\label{S - 1}

Consider the flat torus $T_5$ of size $6\times 5$ defined as follows:

\begin{figure}[H]
\includegraphics[width=13cm]{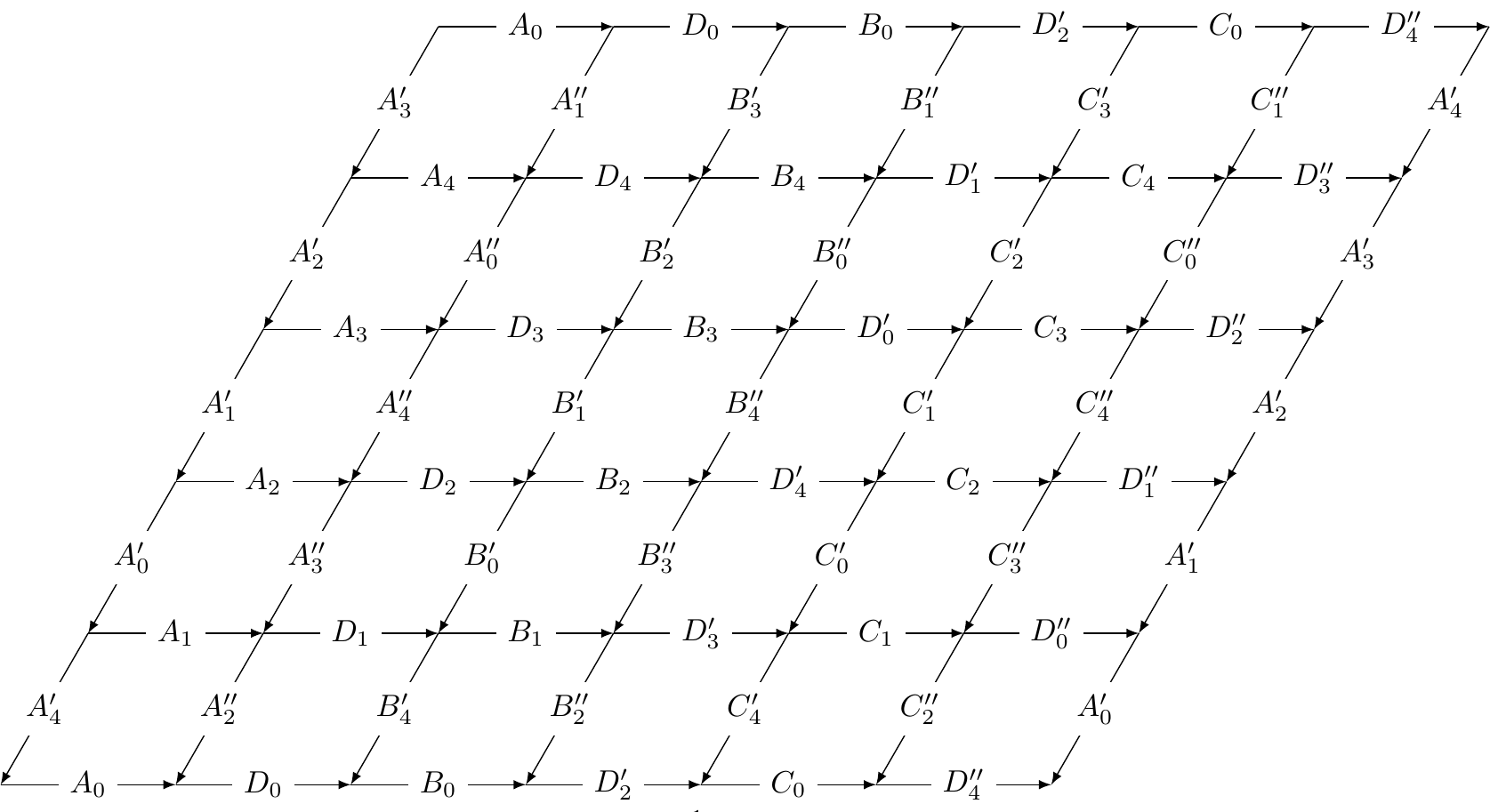}
\end{figure}

\noindent Every edge in $T_5$ is oriented and labelled. The boundary is identified in the standard way respecting both  the orientation and the labelling of the boundary edges.  

Note that there is a non trivial Dehn twist, that we will denote $\tau_{-6}$, 
in the vertical direction. 

We endow the torus $T_5$ with the standard Euclidean metric, in which the cells are (as shown in the figure) lozenges with sides of length 1.

Here is the basic construction. 

The figure contains a total of 20 letters. They are denoted $A_r$, $B_r$, $C_r$, $D_r$, $0\leq r\leq 4$. Let $L$ be a letter. For every triple $K$ of the form 
\[
K:=(L,L',L'')
\] 
 consider an oriented  triangle with edges labelled by $K$ in the given order. We attach this triangle to the torus $T_5$ along its boundary, respecting the orientation and labelling for the boundary edges. 
This operation, repeated for the twenty triples $K$, defines a 2-complex $B_5$.

Let $X_5:=\widetilde{B_5}$ denote the universal cover of $B_5$,  and $G_5:=\pi_1(B_5)$ denote its fundamental group of $B_5$.

Note that the canonical map $T_5\to B_5=X_5/G_5$ is not injective on vertices.  One may view $B_5$ as a ``wrinkled presentation'' of the group $G_5$ and the map $T_5\to B_5$ as the ``sewing map''. Observe furthermore that every triple $K$  ``jumps'' on the torus $T_5$. (We  call $K$ a ``knight''.)

By definition, a \emph{jump} on $T_5$ is an oriented edge between two vertices of $T_5$. Every triple $K$ defines three jumps, from the extremity of an edge in $K$ to the origin of the consecutive edge, modulo 3.  

\begin{lemma}
Jumps are either disjoint or they share a common support.   
\end{lemma}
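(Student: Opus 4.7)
The plan is to prove the lemma by an explicit geometric inspection of the toric figure, organised so as to exploit the symmetries of the construction. First, for each of the twenty letters $L$, I would read off the induced triple $K = (L, L', L'')$ from the labelling and compute the three associated jumps; this produces in total sixty oriented straight segments between vertices of the lozenge lattice of $T_5$.

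The second step is a symmetry reduction. The labelling of $T_5$ is equivariant under the shift $r \mapsto r+1$ of the five copies $A_r, B_r, C_r, D_r$, induced by a translation of the torus along its ``$5$-direction''; this translation is an isometry of $T_5$ permuting the set of triples, so it suffices to verify the alternative for jumps coming from a set of $\IZ/5$-representatives of the twenty triples, reducing effectively from sixty to twelve jumps. One may further exploit the Dehn twist $\tau_{-6}$ to organise this list.

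The key geometric observation is that the jumps, being knight-like segments of short common length between vertices of the lozenge lattice, have their displacement vectors lying in only a small number of parallel classes. Two jumps on parallel but distinct lines are then automatically disjoint; two jumps lying on the same affine line have their starting points on a discrete subset of that line, and the shortness of the segments forces their supports either to coincide or to be disjoint; finally, two jumps with non-parallel supports can only meet at a common vertex of $T_5$, and this case is ruled out by inspecting, vertex by vertex, which jumps are incident to it.

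The main obstacle is bookkeeping rather than any conceptual subtlety: the lemma is a finite verification whose content lies in the specific design of the labelling of $T_5$, which has plainly been chosen so that the stated alternative holds. The $\IZ/5$ reduction described above keeps the case analysis small enough to be carried out by direct inspection of the figure.
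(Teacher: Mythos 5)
Your plan would eventually arrive at the correct conclusion, but it misses the structural observation on which the paper's (much shorter) proof rests, and one of its steps as stated has a gap. The paper observes that the jump issuing from \emph{any} vertex $(x,y)$ of $T_5$ with $x$ even is given by the single affine formula $(x,y)\mapsto(x+1,y-2)$ on $\IZ/6\times\IZ/5$, independently of which triple $K$ produced it, and that the jump from a vertex with $x$ odd is given by the inverse formula. In particular there is only one parallel class, not ``a small number,'' and from any vertex there is at most one possible jump partner. Since the two formulas are inverse and swap the parity of $x$, two jumps sharing a vertex must have the same (unordered) support; this disposes of the lemma without enumerating the sixty jumps, and also disposes of the $\IZ/5$ reduction you propose.

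The concrete gap in your plan is in the sentence ``the shortness of the segments forces their supports either to coincide or to be disjoint.'' Two segments of the same short length on the same affine line can still share exactly one endpoint --- this is the chaining case, where the target of one jump is the source of another, and it is the only genuinely nontrivial case for this lemma. Shortness does not exclude it. What excludes it is the parity and inverse structure just described: a jump takes an even-$x$ vertex to an odd-$x$ vertex, and the jump out of that odd-$x$ vertex is the inverse map, so any attempted chain simply retraces the same pair of vertices. Your final item (``two jumps with non-parallel supports \dots{} ruled out by inspecting vertex by vertex'') is in fact vacuous once one notices there is a single displacement vector up to sign, so the case analysis collapses to the one you treated too quickly. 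If you carry out your plan, you should replace the ``shortness'' step by the explicit computation of the jump displacement, at which point you will have rediscovered the paper's argument and the enumeration becomes unnecessary.
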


\begin{proof}
A jump associated with a triple $K$ corresponds either to  the affine transformation
\[
\begin{cases} x\mapsto x+1 \mod 6\\ y\mapsto y-2\mod 5\end{cases}
\]
where $x$ is even modulo 6, or to its inverse
\[
\begin{cases} x\mapsto x-1 \mod 6\\ y\mapsto y+2\mod 5\end{cases}
\]
where $x$ is odd modulo 6. It is not difficult to show that these two transformations do not depend on $K$. Since they are inverse of each other, jumps with a  common vertex must have the same support. 
\end{proof}

In particular, the jumps define an involution $\sigma$ of the vertex set of $T_5$, whose orbit partition $T_5/\ip \sigma$ coincides with  the vertex set of $X_5/G_5$.

Let us orient the torus $T_5$ counterclockwise, and consider the positive labelling $\in \{1,2,3,4\}$ of the edges issued from a vertex, where $1$ refers to the positive real axis. The basic construction induces a permutation of the labels associated with every jump. We shall now describe this permutation.

\begin{lemma}
The permutation of $\{1,2,3,4\}$ associated with the jump 
\[
\begin{cases} x\mapsto x+1 \mod 6\\ y\mapsto y-2\mod 5\end{cases}
\]
is  the 4-cycle $(1,2,4,3)$.
\end{lemma}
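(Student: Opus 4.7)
The plan is to compute the induced bijection between the four edge-germs at a vertex $v=(x,y)$ with $x$ even and the four edge-germs at its partner $v'=(x+1,y-2)$, and then to verify by direct inspection that this bijection equals the stated $4$-cycle.

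First, I would fix such a vertex $v$, orient $T_5$ counterclockwise, and label the four edge-germs $1,2,3,4$ at both $v$ and $v'$ by the convention of the statement, starting from the positive real axis. Next, I would identify the mechanism producing the permutation: each oriented triangle $K=(L,L',L'')$ of the basic construction has three corners, and the corner between the edges $L$ and $L'$ glues the terminal vertex of $L$ in $T_5$ to the initial vertex of $L'$ in $T_5$, matching the incoming germ of $L$ to the outgoing germ of $L'$ at that common point. By the preceding lemma, every triangle corner incident to $v$ realises the same jump $v\mapsto v'$, so exactly four such corners contribute, one for each edge-germ at $v$, and together they assemble into a bijection on $\{1,2,3,4\}$.

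With this setup the proof is then a bookkeeping step: for each letter $L$ arriving at $v$ I would read off its counterclockwise position at $v$, the letter $L'$ following $L$ in its triangle, and the counterclockwise position of $L'$ at $v'$, then collate the four resulting pairs into a single permutation. Translation invariance of the basic construction under $(x,y)\mapsto(x+2,y)$ and $(x,y)\mapsto(x,y+1)$ ensures that the answer is independent of the particular vertex $v$ with $x$ even, so one computation suffices.

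The main obstacle is not conceptual but careful data extraction from the figure: one must correctly read the eight relevant positions at $v$ and $v'$ and correctly identify the four triangles out of the twenty that contribute at the chosen jump. The particular form $(1,2,4,3)$ of the answer, which is neither the identity, an involution, nor a naive cyclic shift, reflects the effect of the Dehn twist $\tau_{-6}$ built into $T_5$, combining the horizontal translation with a vertical shear as the jump crosses the torus.
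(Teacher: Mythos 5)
Your plan is essentially the paper's proof: set up the matching of edge-germs induced by the triangle gluings, invoke translation invariance so that one vertex suffices, and read off the four label pairs from the figure at the chosen vertex (the paper does this at $(0,0)\mapsto(1,3)$, matching $A_0\leftrightarrow A_0''$, $A_4'\leftrightarrow A_4''$, $D_3''\leftrightarrow D_3$, $A_3'\leftrightarrow A_3$ to read $(1,2,4,3)$). The one imprecision in your plan, ``the letter $L'$ following $L$ in its triangle,'' would self-correct during the extraction: the matched letter at $v'$ is simply the other member of the same triple appearing there, which is the following letter only when $L$ is incoming at $v$ and the preceding one when $L$ is outgoing.
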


This shows that the resulting permutation does not depend on $K$; the permutation associated with the opposite jump is the inverse permutation. 

\begin{proof}
Let us for example do the bottom left corner $(0,0)$, which is mapped to $(1,-2)=(1,3)$ under $\sigma$.  The corresponding transformation of the counterclockwise labelling  reads
\[
\begin{cases} 1 = A_0\\ 2=A_4'\\ 3=D_3''\\ 4=A_3'
\end{cases}\mapsto \begin{cases} 1 = D_3\\ 2=A_0''\\3=A_3\\4=A_4'' 
\end{cases}
\] 
which corresponds to label permutation $(1,2,4,3)$.
\end{proof}

The following shows that $X_5$ is locally isomorphic isomorphic to the Brady complex in the (usual) sense that their links are pairwise isomorphic. 

\begin{lemma}\label{L - link}
Every link in $X_5$ is isomorphic to the link of the Brady complex. 
\end{lemma}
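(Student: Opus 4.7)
The plan is to compute explicitly the link at an arbitrary vertex of $X_5$ and compare it with the known link of the Brady complex $X_0$ from \cite{brady1994automatic}. The two preceding lemmas already show that all jumps have the same affine form and all induce the same label permutation $(1,2,4,3)$, so the local picture at every vertex is the same and it suffices to analyze a single one, which I would take to be the bottom left corner $v_0=(0,0)$ of $T_5$, together with its $\sigma$-partner $(1,3)$.

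First I would assemble the link at the image of $v_0$ in $X_5$ out of three pieces. At any vertex $v$ of $T_5$, four lozenges meet, so the torus contributes a $4$-cycle to the link, whose vertices are the four edges at $v$, indexed by the positive labelling $\{1,2,3,4\}$. Next, each of the twenty attached triangles contributes a ``corner'' edge in the link at any vertex of $T_5$ where two of its edges meet consecutively; listing the twenty triples $(L,L',L'')$ and locating their corners at $v_0$ produces an explicit set of additional edges between the four labelled vertices of the $4$-cycle at $v_0$. This gives the ``half-link'' at $v_0$, and the same computation at $(1,3)$ gives the second half-link. Finally, because $\sigma(v_0)=(1,3)$, the image of $v_0$ in $B_5$ coincides with the image of $(1,3)$, and the full link at this vertex (and hence, by covering space theory, at every vertex of $X_5$) is the union of the two half-links, with their four labelled endpoints identified through the permutation $(1,2,4,3)$ computed in the previous lemma.

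The last and main step is the comparison with the link of the Brady complex. This amounts to a finite bookkeeping exercise: draw both half-links, perform the gluing through $(1,2,4,3)$, and exhibit a graph isomorphism with the link graph of $X_0$ extracted from \cite{brady1994automatic}. The main obstacle is precisely this explicit matching: counting vertices and edges and checking gross invariants such as girth is immediate, but producing the isomorphism requires reading off Brady's link carefully and verifying that the adjacency pattern generated by our twenty triples, after gluing along $\sigma$, reproduces it corner by corner.
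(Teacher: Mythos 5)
Your overall plan -- reduce to a single vertex, compute the link explicitly, and match it against the link of the Brady complex -- is the same as the paper's, and the reduction to the origin (justified by the uniformity of the jump formula and of the label permutation) is correct. However, your model of how the attached triangles contribute to the link is wrong in two related ways, and the error would prevent the final comparison from succeeding.

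First, you claim that a triangle $(L,L',L'')$ contributes ``corner'' edges in the link ``at any vertex of $T_5$ where two of its edges meet consecutively,'' producing ``additional edges between the four labelled vertices of the $4$-cycle at $v_0$.'' But the three edges of a triple never share a vertex in $T_5$: they are scattered around the torus (that is precisely the content of the jump analysis). The gluing of the triangle identifies the terminal vertex of $L$ with the initial vertex of $L'$, which are \emph{different} points of $T_5$; these identifications are the jumps and they define the involution $\sigma$. Consequently, a triangle corner incident to the class $\{v_0,\sigma(v_0)\}$ contributes an edge of the link joining a vertex of the $4$-cycle at $v_0$ to a vertex of the $4$-cycle at $\sigma(v_0)$. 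The ``half-link'' at $v_0$ coming from $T_5$ alone is exactly the bare $4$-cycle, with no extra chords.

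Second, the two $4$-cycles are not ``identified through the permutation $(1,2,4,3)$.'' That would collapse the eight link vertices $\{1,2,3,4\}\sqcup\{1',2',3',4'\}$ down to four, which is not the link of $X_0$. The permutation $s=(1,2,4,3)$ instead records the endpoints of the four new \emph{edges} supplied by the four triangle corners at this vertex, namely $(x,s(x)')$ for $x\in\{1,2,3,4\}$. The correct picture is therefore the disjoint union of the two $4$-cycles together with these four crossing edges, an $8$-vertex $12$-edge graph, and it is \emph{this} graph that must be matched against the Brady link. With your version of the half-links and a literal gluing, the bookkeeping step you describe would not reproduce the Brady link.
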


\begin{proof}
We shall compute the links in $X_5$. That it is isomorphic to that of the Brady complex follows from \cite{brady1994automatic,crisp2005classification} (see also \ts \ref{S - 2} below). Since the expression for $\sigma$ is independent of the base point in $T_5$, it is enough to check the link of the origin. We represent the links at the origin and its image in $T_5$ as follows (the drawings respects the scale provided by the angle metric): 
\begin{figure}[H]
\includegraphics[width=3cm]{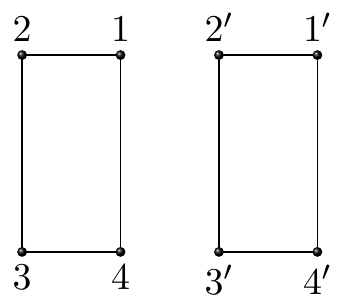}
\end{figure}
\noindent The prime labels correspond to the image $(1,-2)$. According to the previous lemma, edges in the link of $X_5$ corresponds to the permutation $s=(1,2,4,3)$. This defines four additional edges in the above figure: $(x,s(x)')$ for every $x\in \{1,2,3,4\}$. It is straightforward to check  that this graph is the link of $X_0$ (compare \ts \ref{S - 2}).
\end{proof}

The basic construction can be generalized to an arbitrary integer $n\geq 1$ in the following way. 

Suppose first that $n$ is a sufficiently large integer (e.g., $n\geq 4$). Consider a torus $T_n$ of size $6\times n$, where the vertical identification involves a Dehn twist $\tau_{-6}$. For every letter $L$ on $((x,y),(x,y-1))$, where $x$ is even, write labels $L'$ and $L''$ on, respectively, $((x,y-2),(x+1,y-2))$ and   $((x+1,y-3),(x+1,y-4))$; for every letter $L$ on $((1,y),(2,y))$, write labels $L'$ and $L''$ on, respectively, $((3,y-2),(4,y-2))$ and   $((5,y-4),(6,y-4))$. Then the same construction for every triple $K=(L,L',L'')$ on $4n$ letters defines a 2-complex $B_n$ which is locally isomorphic to the Brady complex. The notation $B_n$ is consistent with the previous notation $B_5$.

One can further extend this construction of $B_n$ to every integer $n\geq 1$ as follows. Let $T_\infty:=\dirlim T_n$ (with respect to partial embeddings from a base point) is a cylinder with an obvious action of $\ZI$. Since the set of triples (knights) is $\ZI$-invariant, this action descends to the basic construction $B_\infty$; we let, by definition, $X_n$ is the universal cover of the quotient $B_n$ of this space by $n\ZI$. The notation $B_n$ is again consistent.  Note however than the description using knights  is only clearly visible for $n$ suficiently large ($n\geq 4$ is large enough). 

This shows the following:

\begin{proposition}
The space $X_n$ are pairwise isomorphic for $n\geq 1$.
\end{proposition}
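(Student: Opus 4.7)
The plan is to exploit the direct limit description $T_\infty = \dirlim T_n$ that has already been set up in the paragraph preceding the statement. The key observation is that the covers $X_n$ all arise as the universal cover of a single space, namely $B_\infty$, and hence are canonically isomorphic.

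First, I would verify that the labelling rule used to define the knight triples on $T_n$ is translation-equivariant in the vertical direction: the descriptions ``$L$ on $((x,y),(x,y-1))$ for $x$ even'' and ``$L$ on $((1,y),(2,y))$'' together with their primed images depend only on the $y$-coordinates modulo the relevant translations, so the set of knight triples on $T_\infty$ is invariant under the $\IZ$-translation action. This gives a $\IZ$-action on $B_\infty$ by cellular automorphisms, and the quotient by $n\IZ$ is precisely $B_n$ (this is how $B_n$ is defined in the paragraph above for all $n\geq 1$, including the small cases where knights are not directly visible).

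Second, I would check that $n\IZ$ acts freely and properly discontinuously on $B_\infty$ so that $B_\infty \to B_n$ is a covering map. Freeness is clear because $n\IZ$ already acts freely on the vertex set of $T_\infty$ (vertical translation by $6n$ units on a bi-infinite cylinder), and the pinched-and-filled triangles are carried among themselves by the $\IZ$-action without fixing any cell.

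Third, I would conclude by noting that if $p\colon \widetilde{B_\infty} \to B_\infty$ is the universal cover and $q\colon B_\infty \to B_n$ is the covering by the free $n\IZ$-action, then $q\circ p\colon \widetilde{B_\infty} \to B_n$ is a covering map with simply connected total space, hence is the universal cover of $B_n$. Therefore
\[
X_n = \widetilde{B_n} = \widetilde{B_\infty}
\]
canonically, independently of $n\geq 1$. In particular the $X_n$ are pairwise isomorphic (and, in fact, all identified with the single CAT(0) space $\widetilde{B_\infty}$).

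The only possibly delicate point is the uniform definition of the knight set on $T_\infty$ for small values of $n$, where the description via the jump formulas must be given directly rather than read off the picture of $T_5$; but this is built into the definition of $B_n$ given just before the statement, so the argument reduces to the covering-space manipulation above.
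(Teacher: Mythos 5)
Your argument is correct and is essentially the same as the paper's, which simply notes that the $B_n$ for $n\geq 1$ have $B_\infty$ as a common cover; you have spelled out the covering-space details (translation-equivariance of the knight triples, freeness of the $n\IZ$-action, and the identification $\widetilde{B_n}=\widetilde{B_\infty}$) that the paper leaves implicit in its one-line proof.
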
  
\begin{proof}
They have a common cover $B_\infty$.
\end{proof}
 
In the next section we give a different proof of this fact, which includes isomorphism with the Brady complex $X_0$.

\section{Geometric rigidity}\label{S - 2}

We shall describe the local data by a type (or ``local type''), following  \cite[\ts 4]{surg}. In the latter paper we were interested in two sorts of types, simplicial and metric. In the present paper, we shall use \emph{labelled types}, which add connecting maps to mark the link edges using angle labels as follows (cf.\ \cite[Rem.\ 4.5]{surg}).

\begin{definition}\label{D - types}
A  \emph{labelled type} (in dimension 2) is
\begin{enumerate}
\item a set of graphs (the links);
\item a set of marked shapes, i.e., polygons with filled interior and labelled angles;
\item a set of connecting maps marking every link edge with an angle label.   
\end{enumerate}
\end{definition}

We define the  type $\Aut(F_2)$ as follows:
\begin{enumerate}
\item the link of the Brady complex; it is isomorphic to the graph (see \cite[Fig.\ 6]{crisp2005classification})
\begin{center}
\includegraphics[width=5cm]{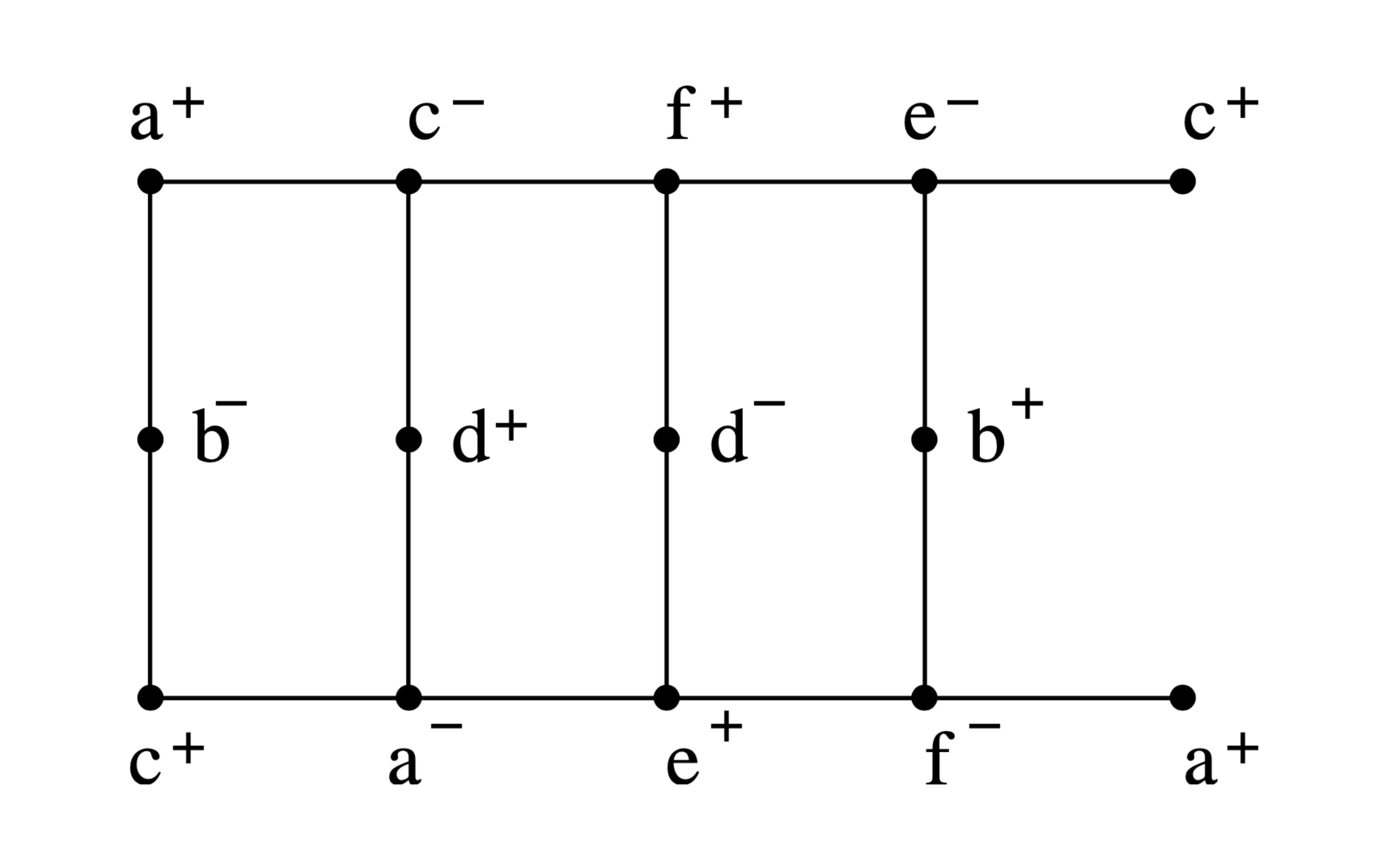}
\end{center}
The letters are associated  (see \cite[\ts 3]{crisp2005classification} for details) with the presentation 
\begin{align*}
\ip{ a,b,c,d,e,f \mid\ & ba=ae=eb,\ de=ec=cd,\\
& bc = cf = fb,\ df = fa = ad,\\
&ca = ac,\ ef = fe }
\end{align*}
\noindent of the braid group $B_4$.

\item two shapes, a lozenge and an equilateral triangle, labelled in the following way: 
\begin{center}
\includegraphics[width=5cm]{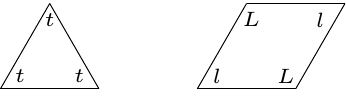}
\end{center}
\item a connecting map defined by 
\begin{center}
\includegraphics[width=4.5cm]{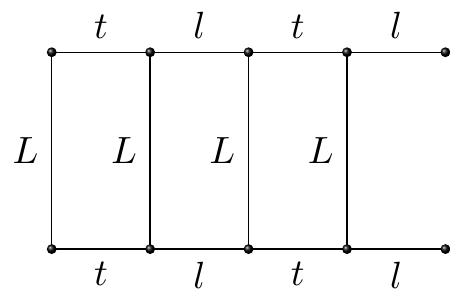}
\end{center}
\end{enumerate}

Let $T$ be a labelled type. We say that a 2-complex with labelled face angles is \emph{of type $T$} it has the correct links and shapes, and the induced marking of the link edges corresponds to a connecting map. A homomorphism between two complexes of type $T$ is a 2-complex homomorphism which preserves the angle labels.

The following is straightforward to verify from, e.g., the original description of $X_0$ in \cite{brady1994automatic}.

\begin{proposition}
The Brady complex $X_0$ is of type $\Aut(F_2)$.  
\end{proposition}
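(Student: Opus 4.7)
The plan is to verify the three structural requirements of Definition~\ref{D - types} directly from Brady's explicit construction of $X_0$ in \cite{brady1994automatic}, using the identification of the vertex link made precise in \cite{crisp2005classification}. By homogeneity, $X_0$ is a regular cover of a compact $2$-complex all of whose vertices are equivalent under the group action, so it suffices to carry out the verification at a single vertex and on a single representative of each $2$-cell orbit.

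First I would recall that Brady's complex is built from two types of Euclidean $2$-cells: equilateral triangles (of side $1$, all angles $\pi/3$), coming from the length-$3$ relators $ba=ae=eb$, $de=ec=cd$, $bc=cf=fb$, $df=fa=ad$ of the presentation of $B_4$ displayed above, and lozenges with angles $\pi/3$ and $2\pi/3$, coming from the two commuting relators $ca=ac$ and $ef=fe$. With the obvious Euclidean piecewise flat metric, these cells coincide as marked shapes with the two shapes listed in item~(2) of the type $\Aut(F_2)$, once the angle labels of the lozenge and the triangle are read off as in the diagram. This settles the shapes condition.

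Next, for the links: by Lemma~\ref{L - link} the vertex link is already identified, via \cite[Fig.~6]{crisp2005classification}, with the graph on $a,b,c,d,e,f$ listed in item~(1) of the type $\Aut(F_2)$. Under the isomorphism, each edge of the link corresponds to a pair of consecutive edges in a $2$-cell of $X_0$ meeting at the vertex, and therefore to a well-defined corner angle, which is precisely the angle label the link edge must carry. I would then check, one orbit at a time, that the resulting labelling of the link edges of $X_0$ agrees with the connecting map displayed in item~(3). Concretely this amounts to checking that each link edge coming from a lozenge gets the angle ($\pi/3$ or $2\pi/3$) of the corner of the lozenge at which it arises, and that each link edge coming from a triangular relator gets the label $\pi/3$; both facts follow at once from the description of the relators in \cite{crisp2005classification} and from the Euclidean metric in \cite{brady1994automatic}.

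No serious obstacle is expected: the only step that requires any care is the last one, the identification of the induced marking with the prescribed connecting map, and this is essentially a matter of comparing two pictures. Once the correspondence between the edges of the abstract link graph and the corners of the adjacent $2$-cells is fixed (and this correspondence is what the isomorphism of Lemma~\ref{L - link} provides), the match of angle labels is immediate from the fact that Brady's cells carry the standard Euclidean angles declared in item~(2).
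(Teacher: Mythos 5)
Your proposal is correct and takes essentially the same approach as the paper, which simply declares the verification ``straightforward'' from Brady's original description (citing \cite{brady1994automatic}) and does not spell out the three checks of Definition~\ref{D - types} that you carry out. The only minor slip is the appeal to Lemma~\ref{L - link} for the identification of the link of $X_0$: that lemma concerns $X_5$ and itself defers to \cite{crisp2005classification} for the Brady link, so it is cleaner to cite \cite[Fig.~6]{crisp2005classification} directly.
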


Our main theorem in this section is a converse of this statement. 
More precisely, we prove that the complex $X_0$ satisfies a \textbf{universal property}: it is freely generated by any of its faces. 

\begin{theorem}\label{T - rigid} Let $X$ be a 2-complex of type $\Aut(F_2)$.
Let $S$ be a face in $X_0$ and let $f\colon S\to X$ be a label and shape preserving map from $S$ to a face in $X$. There exists a unique homomorphism $\tilde f\colon X_0\to X$ whose restriction to $S$ coincides with $f$. Furthermore, $\tilde f$ is a covering map onto its image. 
\end{theorem}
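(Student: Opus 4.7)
The plan is to prove existence and uniqueness by propagating $f$ outward from the seed face $S$, using the rigidity of the local type $\Aut(F_2)$ to force the image of each subsequent face. I would first address uniqueness. Suppose $\tilde f$ exists. If $F$ is a face on which $\tilde f$ is already known, then at each vertex $v\in F$ the connecting maps of type $\Aut(F_2)$ assign an angle label to each edge of $\Lk(v,X_0)$, hence label each face-corner at $v$. Because $X$ is of the same type, the face-corners at $\tilde f(v)$ carry the same labels, and a label- and shape-preserving homomorphism must match corner for corner. Thus, knowing the corner at $v$ coming from $F$ fixes the image of every other face-corner at $v$, and in particular of every face of $X_0$ adjacent to $F$ at $v$. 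Iterating this over the vertices of $F$ and propagating face-to-face determines $\tilde f$ everywhere, since $X_0$ is connected.

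For existence, I would run this propagation as a construction. Starting from $f\colon S\to X$, at each vertex $v$ of $S$ the isomorphism $\Lk(v,X_0)\cong \Lk(f(v),X)$ required by the type uniquely prescribes the image of every face-corner at $v$, and hence of every face incident to $v$. Iterating yields a candidate $\tilde f$ defined on the star of $S$, then on successive stars, eventually on all of $X_0$. The content of existence is to check that this is well-defined: two different chains of adjacent faces connecting $S$ to the same face $F'$ must yield the same image of $F'$. Since $X_0$ is CAT(0), hence simply connected, any two such chains differ by a sequence of elementary moves across a single face or around a single vertex. For each such move, consistency of the two prescriptions reduces to the local data: around a single face it is the hypothesis that $f$ is shape- and label-preserving, and around a single vertex it is the equality of the connecting maps in $X_0$ and $X$. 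Hence the propagation is globally coherent and $\tilde f$ is well-defined.

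Finally, at every vertex $v\in X_0$ the construction produces a label-preserving bijection of face-corners at $v$ with face-corners at $\tilde f(v)$, which by definition of type $\Aut(F_2)$ is induced by an isomorphism $\Lk(v,X_0)\to\Lk(\tilde f(v),X)$. Thus $\tilde f$ is a local isomorphism of 2-complexes, and being surjective onto its image it is a covering map onto its image, as claimed.

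The main obstacle is the well-definedness step in the middle paragraph: one must verify that the local type data is globally coherent on $X_0$, i.e.\ that propagating around a closed chain of faces lands on the expected image. The reduction of arbitrary loops to ``one-face'' and ``one-vertex'' loops leans essentially on the simple connectivity of $X_0$; without it, no amount of local type matching would force a canonical extension. The labelled-type formalism is designed precisely so that one-face and one-vertex consistency hold by construction, so once the homotopy reduction is in place the rest is straightforward bookkeeping.
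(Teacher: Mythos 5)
Your proposal is correct in its essential structure and uses the same key local ingredient as the paper — namely, that the face-angle labels are pairwise distinct around a vertex, so that once the image of one corner at a vertex $v$ is known, the required link isomorphism $\Lk(v,X_0)\to\Lk(\tilde f(v),X)$ is uniquely determined by labels, hence the images of all other corners at $v$. This is exactly the content of the paper's preliminary lemma (unique label-preserving extension of $f$ to $\St_p(X_0)$). Where you diverge is in the global propagation.

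The paper runs a metric ball-exhaustion argument: let $C$ be the maximal ball centred in $S$ to which the extension is unique, extend $f$ across an $\varepsilon$-neighbourhood of each boundary point (face interior, edge interior, vertex), patch using compactness of $\partial C$, and contradict maximality. This exploits the CAT(0) metric (convex balls, compact spheres) and so never has to prove a combinatorial coherence statement about loops of face-adjacencies. Your argument instead propagates along galleries and invokes simple connectivity to reduce coherence to one-face and one-vertex moves. That is a genuinely different route: it is combinatorial rather than metric, and it would work with simple connectivity alone, without the CAT(0) metric structure; but the price is that the reduction of an arbitrary closed gallery to trivial ones via elementary moves is a nontrivial combinatorial lemma (essentially van Kampen/gallery-connectedness for 2-complexes, or chamber-system contractibility) which you state but do not justify. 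That lemma is true and standard, but it is precisely what the paper's ball argument is designed to bypass, so if you go this way you should either cite it or prove it. Your reading of the covering-map conclusion — that $\tilde f$ restricts to a link isomorphism at every vertex and is therefore a covering onto its image — matches the paper's brief ``by construction'' and is fine.
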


Every 2-complex of type $\Aut(F_2)$ can be naturally endowed with a metric structure, in which the triangle face is equilateral and the lozenge a union of two equilateral triangles. By the link condition, every such a complex is locally CAT(0). Every homomorphism between complexes of type $\Aut(F_2)$ is isometric, and conversely, every isometry preserves the angle labels. The universal property in the metric situation states that if $f\colon S\to X$ is an isometry between a face $S$ of $X_0$ and a face of $X$, then there exists a unique isometry $\tilde f\colon X_0\to X$ whose restriction to $S$ coincides with $f$.

\begin{lemma}
Let $X$ be a 2-complex of type $\Aut(F_2)$.
Let $S$ be a face in $X_0$ and let $f\colon S\to X$ be a map identifying $S$ with a face in $X$. Let $p$ be a vertex of $S$. There exists a unique label preserving extension $\tilde f\colon \St_p(X_0)\to X$ of $f$ to the star of $p$ in $X_0$.  
\end{lemma}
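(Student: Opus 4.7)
The plan is to reduce the problem to the combinatorics of the labeled link at $p$. The closed star $\St_p(X_0)$ is the union of the faces of $X_0$ containing $p$, and these faces correspond bijectively to the edges of $\Lk_p(X_0)$, while the edges of $X_0$ through $p$ correspond to the vertices of the link. Through the connecting map, each link edge carries an angle label read off from the corresponding corner of a shape. Since $X$ is also of type $\Aut(F_2)$, $\Lk_{f(p)}(X)$ is the same labeled graph as $\Lk_p(X_0)$, namely the Brady link with the labeling displayed after item (3) of Definition \ref{D - types}.

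First I would translate the problem to the link level. The face $S$ together with its image $f(S)$ pick out a labeled edge $e_S$ of $\Lk_p(X_0)$ and a labeled edge $e_{f(S)}$ of $\Lk_{f(p)}(X)$, and the restriction $f|_S$ induces a label-preserving identification $e_S\to e_{f(S)}$ with matched endpoints. The key claim is that this identification extends uniquely to a label-preserving graph isomorphism $\phi\colon \Lk_p(X_0)\to \Lk_{f(p)}(X)$. This is a rigidity property of the labeled Brady link: its group of label-preserving automorphisms acts freely on the oriented edges. I would verify it by direct inspection of the picture, propagating the isomorphism outward from $e_S$ vertex by vertex; at each new link vertex the already-known incident labels, together with the local combinatorics of the Brady link, leave a single consistent choice of image.

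Once $\phi$ is constructed, the extension $\tilde f$ is forced on each face of $\St_p(X_0)$. For a face $F$ corresponding to a link edge $e_F$, the image $\tilde f(F)$ is the unique face of $X$ through $f(p)$ whose corner at $f(p)$ is $\phi(e_F)$, and the labels on the shape of $F$ (lozenge or equilateral triangle, as dictated by the type) force a unique label-preserving identification $F\to \tilde f(F)$ sending $p$ to $f(p)$. Consistency on a shared edge of two adjacent faces through $p$ is automatic because $\phi$ is a graph map, so the two candidate images of that edge coincide with the edge of $X$ at $f(p)$ associated with the common endpoint of the two link edges. The main obstacle is the rigidity property of the labeled Brady link; this is a concrete case analysis that uses the specific structure of the type $\Aut(F_2)$ in an essential way, and is what ultimately explains why the universal property of Theorem \ref{T - rigid} is special to this type.
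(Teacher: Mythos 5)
Your proof takes essentially the same route as the paper: translate to the link, argue that the label-preserving map on one link edge extends uniquely to a label-preserving isomorphism $\Lk_p(X_0)\to\Lk_{f(p)}(X)$, and then pull this back to a unique extension of $f$ on the star. The paper states the same key combinatorial fact a bit more crisply — the labels incident to any vertex of the Brady link are pairwise distinct, so the extension is forced vertex by vertex — but this is exactly the ``rigidity'' you describe verifying by propagation.
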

\begin{proof} Let $L_0$ denote the link of $p$ in $X_0$ and $L$ the link of $f(p)$ in $X$. 
The map $f$. The map $f$ induces a label preserving map from an edge $e_0$ in $L_0$ to an edge $e$ in $L$. Since the labels incident to an arbitrary vertex in $L_0$ and $L$ are identical, and the labels around a vertex are pairwise distinct, there exists a unique label preserving extension of $f$ to the faces adjacent to $S$ containing $p$. More generally, it is easy to check that the map $e_0\to e$ admits a unique label preserving extension to a graph isomorphism $L_0\to L$. This shows that $f$ admits a unique label preserving extension  $\tilde f\colon \St_p(X_0)\to X$.
\end{proof}

\begin{proof}[Proof of Theorem \ref{T - rigid}]
We refer to the standard CAT(0) structure on $X_0$ defined before the lemma. Let $C$ be a maximal ball  in $X_0$ centred in $S$ to which $f$ admits an unique extension. We let $f$ denote this extension. Suppose for towards a contradiction that $C$ has a finite radius. 

Let $p\in \del C$. If $p$ belongs to the interior of a face, it is obvious how to extends $f$ to an $\e$-neighbourhood of  $p$ in $X_0$. Suppose that $p$ belongs to the interior of an edge $e$, and let $f$ be the unique face containing $e$ and intersecting the interior of $C$. Since both $X_0$ and $X$ are of type $\Aut(F_2)$, there exists a unique extension of $f$ to an $\e$-neighbourhood of  $p$ in $X_0$. Assume now that $p$ is a vertex of $\del C$. In this case, $C$ contains a face, and the previous lemma shows that $f$ can be extended in a unique way to an $\e$-neighbourhood of $p$. 

Furthermore, that if $p,p'$ are two points in $\del C$ at distance $\leq 1$, then the two extensions of $f$ from $p$ and $p'$ coincide on their intersection. Since $\del C$ is compact, this shows that $f$ can be extended to an $\e$-neighbourhood of $C$, contradicting the maximality of $C$.

Finally, $\tilde f$ is a covering map by construction.
\end{proof}

\begin{corollary}
The spaces $X_n$ are pairwise isomorphic for every $n\geq 0$. 
\end{corollary}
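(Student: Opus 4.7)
The plan is to reduce the statement to a direct application of Theorem \ref{T - rigid}, using Lemma \ref{L - link} as the key input for $n\geq 1$ and nothing at all for $n=0$. Since the already-established proposition gives that all $X_n$ with $n\geq 1$ are pairwise isomorphic, it suffices to exhibit an isomorphism $X_0\to X_1$ (say); equivalently, to show that $X_n$ is isomorphic to $X_0$ for some, hence every, $n\geq 1$.

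First I would verify that each $X_n$, for $n\geq 1$, is a 2-complex of type $\Aut(F_2)$ in the sense of Definition \ref{D - types}. The shapes are manifest from the basic construction: the torus $T_n$ supplies lozenges and each triple $K=(L,L',L'')$ supplies an equilateral triangle, with the labelling of angles determined by the orientation/label data fixed in \ts\ref{S - 1}. The link data is exactly the content of Lemma \ref{L - link} (extended from $n=5$ to general $n\geq 1$ by the same computation, since the jump formulas $x\mapsto x\pm 1$, $y\mapsto y\mp 2$ are independent of $n$). What still needs to be checked is that the induced marking of link edges agrees with the connecting map specified in the definition of the type $\Aut(F_2)$; this is the content of the second lemma of \ts\ref{S - 1}, where the permutation $(1,2,4,3)$ realizing a jump is exactly the datum needed to identify the connecting maps of $X_n$ and of $X_0$.

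Once $X_n$ is known to be of type $\Aut(F_2)$, I would pick any face $S_0$ in $X_0$, any face $S_n$ in $X_n$ of the same shape (there is a triangle and a lozenge of each type available by the previous paragraph), and a label- and shape-preserving identification $f\colon S_0\to S_n$. Theorem \ref{T - rigid} then produces a unique homomorphism $\tilde f\colon X_0\to X_n$ extending $f$, which is furthermore a covering map onto its image. Because $X_n$ is connected and $\tilde f(X_0)$ is open and closed (covering maps are open; the image is closed by a standard link/adjacency argument, or simply because $\tilde f$ is locally surjective on stars as built in the proof of Theorem \ref{T - rigid}), the map $\tilde f$ is onto. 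Both $X_0$ and $X_n$ are simply connected (CAT(0) universal covers), so the covering $\tilde f$ must be a homeomorphism, hence an isomorphism of labelled complexes.

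The main obstacle, as I see it, is not the covering/simple-connectedness step but the verification that $X_n$ is genuinely of type $\Aut(F_2)$ in the \emph{labelled} sense of Definition \ref{D - types}, not merely in the weaker sense of having abstractly isomorphic links. This is precisely the point the authors flagged in the introduction (''the notion of local isomorphism in this statement is slightly more restrictive'') and where the bookkeeping from \ts\ref{S - 1}, namely the computation of the jump and of the 4-cycle $(1,2,4,3)$, becomes essential: this information pins down the connecting map. Combining this with the proposition already established for $n\geq 1$ gives the full conclusion $X_n\cong X_0$ for all $n\geq 0$.
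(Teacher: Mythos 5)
Your proposal is correct and follows essentially the same route as the paper: establish that $X_n$ is of type $\Aut(F_2)$ (with the connecting-map check you rightly flag as the nontrivial point), invoke Theorem \ref{T - rigid} to get a covering $X_0 \to X_n$, and use simple connectedness of $X_n$ to upgrade the covering to an isomorphism. The paper's proof is terse — it simply asserts "$X_n$ is of type $\Aut(F_2)$" and then gives the covering/simply-connected argument in one line — so your version is the same argument with the implicit verification made explicit.
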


\begin{proof}
Since $X_n$ is of type $\Aut(F_2)$, we have a covering map $X_0\to X_n$. Since $X_n$ is simply connected, this map is an isomorphism.  
\end{proof}

\begin{corollary}
The groups $G_n$ are of finite index in $\Aut(F_2)$. 
\end{corollary}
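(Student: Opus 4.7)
The plan is to exploit the fact, established in the previous corollary, that $X_n$ is isomorphic (as a labelled type complex, hence as a CAT(0) metric space) to the Brady complex $X_0$. Once $X_n$ and $X_0$ are identified, both $G_n$ and $\Aut(F_2)$ will be seen as uniform lattices in a common locally compact group, and the conclusion will follow from a standard lattice commensurability argument.

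More precisely, since $B_n$ is obtained from the compact torus $T_n$ by attaching finitely many triangles, $B_n$ is compact. Hence $G_n=\pi_1(B_n)$ acts by deck transformations on the universal cover $X_n=\widetilde{B_n}$ freely, properly discontinuously, and cocompactly. Because this action permutes lifts of faces and preserves labels by construction (the triangles and lozenges of $B_n$ come equipped with the type--$\Aut(F_2)$ marking), the action is through label--preserving automorphisms. Composing with the isomorphism $X_n\xrightarrow{\cong} X_0$ provided by the preceding corollary, we obtain an injective homomorphism
\[
\rho_n\colon G_n\inj \Aut_{\mathrm{type}}(X_0)
\]
into the group of label--preserving automorphisms of $X_0$, and $\rho_n(G_n)$ acts freely and cocompactly on $X_0$.

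On the other hand, by the assumption recalled at the beginning of the paper, $\Aut(F_2)$ also acts on $X_0$ properly discontinuously and cocompactly, and this action is type--preserving (this is part of the statement that $X_0$ is of type $\Aut(F_2)$). Endow $\Gamma:=\Aut_{\mathrm{type}}(X_0)$ with the compact--open topology; since $X_0$ has finite vertex links and since the universal property given by Theorem~\ref{T - rigid} shows that any element of $\Gamma$ is determined by its effect on a single face (and on its type--preserving identifications), $\Gamma$ is a locally compact group in which vertex (resp.\ face) stabilisers are compact. Both $\rho_n(G_n)$ and $\Aut(F_2)$ are thus uniform lattices in $\Gamma$, and so their intersection has finite index in each. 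This yields the commensurability $G_n\sim \Aut(F_2)$, and, after replacing $\rho_n$ by a suitable conjugate so that $\rho_n(G_n)\subseteq \Aut(F_2)\cdot \Stab_\Gamma(S)$ for a chosen face $S$, the freeness of the $G_n$--action (together with the fact that face stabilisers in $\Aut(F_2)$ are finite) lets us arrange $\rho_n(G_n)\subseteq \Aut(F_2)$ with finite index.

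The step that requires most care is the last one: turning commensurability of the two uniform lattices $G_n$ and $\Aut(F_2)$ into a literal finite--index inclusion. The clean way to do this is to use Theorem~\ref{T - rigid}: fix a face $S_0\subset X_0$ in a single $\Aut(F_2)$--orbit, and show that every $G_n$--translate of $S_0$ is in fact an $\Aut(F_2)$--translate, by lifting along the covering $X_0\to X_0/\Aut(F_2)$. Since the action of $G_n$ is free and $X_0\to X_0/G_n$ is then a factor of the covering $X_0\to X_0/\Aut(F_2)$, this produces the desired subgroup inclusion $G_n\hookrightarrow \Aut(F_2)$, whose index equals the ratio of covolumes $[\Aut(F_2):G_n]=\vol(B_n)/\vol(X_0/\Aut(F_2))<\infty$.
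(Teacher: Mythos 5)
The paper's argument is considerably more direct than yours: it does not pass through lattice commensurability at all. Instead, it uses the fact (from Crisp--Paoluzzi) that $\SAut(F_2)$ acts \emph{transitively on the set of triangles} of $X_0$. Given $s\in G_n$, fix a triangle $f$ and choose $t_s\in\SAut(F_2)$ whose restriction to $f$ agrees with $s|_f$; the uniqueness clause of Theorem~\ref{T - rigid} then forces $s=t_s$ on all of $X_0$, so $s\mapsto t_s$ is an injective homomorphism $G_n\hookrightarrow\SAut(F_2)\subset\Aut(F_2)$, and finite index follows from cocompactness. This produces the subgroup inclusion in one stroke, with no need to compare lattices in an ambient automorphism group.

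Your proposal, by contrast, has a genuine gap at the commensurability step. The claim that ``both $\rho_n(G_n)$ and $\Aut(F_2)$ are uniform lattices in $\Gamma$, and so their intersection has finite index in each'' is false for general locally compact $\Gamma$ (e.g.\ $\mathbb Z$ and $\sqrt2\,\mathbb Z$ in $\mathbb R$). To make this step work you would need to show that $\Gamma=\Aut_{\mathrm{type}}(X_0)$ is in fact \emph{discrete}, which does follow from the rigidity theorem (a type-preserving automorphism fixing a face pointwise is the identity, so face stabilisers are finite) --- but you never establish this, and once you have it you no longer need the lattice formalism. Moreover, even granting commensurability, this only shows $G_n$ and $\Aut(F_2)$ share a common finite-index subgroup; it does not yield a literal inclusion $G_n\hookrightarrow\Aut(F_2)$. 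Your final paragraph gestures at the right repair (pin down a face and use Theorem~\ref{T - rigid} to identify $G_n$-translates with $\Aut(F_2)$-translates), which is essentially the paper's argument in disguise, but it is not carried out; the missing ingredient is precisely the transitivity of $\SAut(F_2)$ on triangles, which is what converts ``agrees on a face'' into ``is an element of $\SAut(F_2)$.''
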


\begin{proof}
The special automorphism group $\SAut(F_2)$, which is of index 2 in $\Aut(F_2)$, acts transitively on the set of triangles in $X_0$ by the description in  \cite{crisp2005classification}. If $f$ is a triangle in $X_0$, and $s$ an element in $G_n$, then there exists a unique element $t_s\in \SAut(F_2)$ whose restriction to $f$ coincide with $s$. By the theorem, $s$ and $T-s$ coincide on $X_0$, and the map $s\mapsto t_s$ provides an embedding of $G_n$ into $\SAut(F_2)$.   
\end{proof}
	
Another corollary, Theorem \ref{Th - frame} below, shows that the Brady complex admits a ``frame'' in the following sense. 

We recall that a flat plane in $X_0$ is an isometric embedding $\IR^2\inj X_0$ of the standard Euclidean plane in $X_0$. 

 \begin{definition}
A \emph{frame} on $X_0$ is an orientation, and a labelling by two letters $e$ and $f$, of the edge set of $X_0$, such that for every flat plane $\Pi$ in $X_0$ which is a union of lozenges, the following holds
\begin{enumerate}
\item the ordered set $B_x:=(e_x,f_x)$ of outgoing edges at a vertex $x$ in $\Pi$, with respective labels $e$ and $f$,  forms a basis of $\Pi$,  
\item  the unique translation of $\Pi$ which takes a vertex $x$ to a vertex $y$ takes the ordered set $B_x$ to the ordered set $B_y$.
\end{enumerate}
 \end{definition}
 
 Thus a frame is a way to move a basis consistently along the various embeddings $\Pi$ into $X_0$.

\begin{theorem}\label{Th - frame}
There exists a frame on $X_0$.
\end{theorem}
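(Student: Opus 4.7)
The plan is to read the frame directly off the angle-label data that the type $\Aut(F_2)$ assigns to the corners of the lozenge. The four corners of the lozenge shape receive angle labels; hence the four edges of the lozenge split into two parallel pairs, each pair being characterised by the (unordered) pair of angle labels at its endpoints. I would designate one of these parallel pairs as the ``$e$-direction'' and the other as the ``$f$-direction'', and I would fix an orientation convention on each pair using the ordered angle-label pair of its endpoints.

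With these conventions, every edge $a$ of $X_0$ bounding some lozenge inherits an $e/f$-label and an orientation. The key point to verify is that this assignment is intrinsic to $a$: the angle labels at the two endpoints of $a$ are part of the type data at those vertices (see \ts\ref{S - 2}), so any two lozenges of $X_0$ containing $a$ read off the same ordered pair of angle labels at the endpoints of $a$, and in particular agree on the $e/f$-label and on the orientation assigned to $a$. Edges of $X_0$ incident only to triangles play no role in the frame conditions, so I may label and orient them arbitrarily.

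To verify conditions (1) and (2), I would fix a flat plane $\Pi \subset X_0$ which is a union of lozenges and inspect, using Lemma \ref{L - link} together with the labelled-link diagram of \ts\ref{S - 2}, the arrangement of lozenge corners incident to a vertex $x \in \Pi$. The goal is to check that among the outgoing edges of $x$ in $\Pi$ exactly one carries label $e$ and exactly one carries label $f$, and that these two edges are not collinear in $\Pi$, so that $B_x = (e_x, f_x)$ is a basis of $\Pi$. Condition (2) is then immediate, since the label and orientation of each edge of $\Pi$ depend only on the angle labels at its endpoints, which are preserved by any translation of $\Pi$.

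The main obstacle I expect is setting up the pairing from parallel directions to $\{e,f\}$ and the orientation conventions so that condition (1) holds uniformly at every vertex $x$ of every flat plane $\Pi$. This is a finite case analysis based on the labelled link of $X_0$ and the angle-label data of the type, together with a description of the possible combinatorial arrangements of lozenge stars in $\Pi$.
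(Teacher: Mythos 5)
Your approach is genuinely different from the paper's, and as written it has a gap at a crucial step. The paper constructs the frame globally: it uses the covering map $X_0\twoheadrightarrow B_5$ established in \ts\ref{S - 1} and \ts\ref{S - 2}, takes the obvious Euclidean frame on the torus $T_5$, pushes it through the sewing map $T_5\to B_5$ (a bijection on edges), and lifts it to $X_0$; conditions (1) and (2) then hold because every flat plane of $X_0$ maps onto the image of $T_5$ in $B_5$, and the torus frame is translation-invariant by construction. You instead try to read the frame off the local angle-label data of the type alone.

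The central unproven claim in your proposal is well-definedness. You assert that ``any two lozenges of $X_0$ containing $a$ read off the same ordered pair of angle labels at the endpoints of $a$,'' on the grounds that the angle labels are ``part of the type data at those vertices.'' But in Definition \ref{D - types} the angle labels are attached to corners of faces, not to vertices or edges: if $a$ lies on two lozenges, their respective corners at a given endpoint of $a$ are distinct face-angles whose labels need not agree, and whether they agree is a concrete property of the labelled link and connecting map that your argument asserts rather than checks. In the same vein, your assumption that the unordered endpoint-label pair separates the two parallel edge-pairs of the lozenge is a property of the particular labelling, not a generality (it would fail, for instance, if opposite corners of the lozenge carry the same label, which is the geometrically natural situation since opposite angles are equal). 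Finally, condition (2) of the frame definition is not ``immediate'' from locality; it requires that the angle labels recur periodically around the vertices of every flat plane $\Pi$, and this global coherence, which the paper obtains automatically from the torus quotient, is precisely what your finite case analysis would have to supply but does not. Your route might well succeed after carrying out that check against the concrete labelled-link data of the type $\Aut(F_2)$, but as written the key steps are claims rather than arguments, whereas the paper's passage through $T_5$ and $B_5$ yields both the labelling and its translation-invariance without any case-by-case verification.
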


\begin{proof}
The map $X_0\surj B_5$ is a covering map. We consider the obvious frame on the torus $T_5$, and the induced orientation and labelling of the edge set of $B_5$ by the sewing map $T_5\to B_5$ (which is a bijection on the edge set), and lift the orientation and labelling to $X_0$ using the map $X_0\surj B_5$. Since every flat plane in $X_5$ maps onto the image of $T_5$ in $B_5$, this defines a frame on $X_0$. 
\end{proof}

We shall say that a group of automorphisms of $X_0$ is \emph{orientable} if it preserves the frame constructed in Theorem \ref{Th - frame}.

Note that every finite index subgroup of $\Aut(F_2)$ contains a finite index subgroup which is orientable. 

\begin{proof}
Let $G$ be a finite index subgroup of $\Aut(F_2)$. Then the group $G\cap G_5$ is of finite index in $\Aut(F_2)$: 
\[
[\Aut(F_2):G\cap G_5]\leq [\Aut(F_2):G][\Aut(F_2): G_5].
\]
Furthermore, $G\cap G_5$ is orientable since $G_5$ is. 
\end{proof}

\section{Pinching and filling tori}\label{S - 1.5}

  The spaces in  \ts\ref{S - 1} are obtained in two steps, by a procedure which can be described as ``pinching and systolic filling'' starting from a flat torus. 
  
Theorem \ref{T - rigid} shows that  every such a construction, using a family of flat tori, will have $X_0$ as a universal cover, provided it satisfies a few basic conditions, described in the following proposition.
  
\begin{proposition} Let $t\geq 1$ be an integer. Suppose that:
\begin{enumerate}
\item
  $T_1,\ldots T_t$ is a finite family of flat tori, endowed with a simplicial metric structure in which every cell is a lozenge with sides of length 1
\item
  $\sigma$ is a fixed point free involution on the vertex set of $T:= \bigsqcup_{k=1}^t T_k$
\item  the systolic length in $T':=T/\ip{\sigma}$ is 3
\item every edge in $T'$ belongs to a unique systole of length 3
\item the systolic filling $ B$ of $T'$, obtained by attaching isometrically an equilateral triangle to every systole in $T'$, is locally CAT(0) (i.e., the link girth in $B$ is $\geq 2\pi$)
  \end{enumerate}
then $\widetilde B\simeq X_0$.
\end{proposition}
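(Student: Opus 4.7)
The plan is to verify that $\widetilde B$ is a CAT(0) 2-complex of type $\Aut(F_2)$ in the sense of Definition \ref{D - types}, and then invoke Theorem \ref{T - rigid}. Since $\widetilde B$ is simply connected, the covering map $X_0 \to \widetilde B$ produced by the theorem from any face identification is automatically an isomorphism, giving $\widetilde B \simeq X_0$.

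First I would equip $B$ with the shape and angle data required by the type $\Aut(F_2)$. By hypothesis (1), the 2-cells of $T'$ inherited from $T$ are unit lozenges, and by construction (5) the cells added by the systolic filling are unit equilateral triangles. These are precisely the two shapes listed in Definition \ref{D - types}(2), so it suffices to transport the angle labels from the standard model shapes after choosing a combinatorial identification on each 2-cell. Hypothesis (5) then guarantees that $B$, and hence $\widetilde B$, is locally CAT(0), so the natural Euclidean metric makes $\widetilde B$ CAT(0).

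The heart of the argument is the link analysis at a vertex $v$ of $B$. Hypothesis (2) gives $L_v$ exactly $8$ vertices (four from each of the two $\sigma$-equivalent vertices in $T$), arranged by the lozenge structure of $T'$ into two $4$-cycles whose edges carry the alternating labels $\pi/3$ and $2\pi/3$. Hypothesis (4) then says that the systolic filling attaches \emph{exactly one} equilateral triangle along each edge at $v$, adding four additional edges of label $\pi/3$ to $L_v$ which link the two $4$-cycles into a $3$-regular graph. Hypothesis (5) forces its combinatorial girth to be at least $6$. I would then identify this graph, together with its induced edge labelling, with the Brady link as depicted in \cite[Fig.~6]{crisp2005classification}; the resulting edge marking is the desired connecting map, showing that $\widetilde B$ is of type $\Aut(F_2)$.

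The main obstacle is this last identification, which requires more than a count of vertices and edges: one must show that the rigid combinatorial rule of hypothesis (4), together with the two $4$-cycles of lozenge corners forced by (1), pins down the four triangle edges into exactly the Brady configuration rather than into any competing $3$-regular graph of girth $6$ on $8$ vertices. Once this step is carried out and $\widetilde B$ is recognised as a CAT(0) complex of type $\Aut(F_2)$, Theorem \ref{T - rigid} produces a covering map $X_0 \to \widetilde B$ extending any chosen face identification, and simple connectedness of $\widetilde B$ upgrades it to the required isometric isomorphism.
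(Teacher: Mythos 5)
Your overall strategy is the same as the paper's: establish that $B$ (equivalently $\widetilde B$) is a 2-complex of type $\Aut(F_2)$ and then appeal to Theorem~\ref{T - rigid} together with simple connectedness of $\widetilde B$. The shape-and-angle bookkeeping and the observation that the link at a vertex of $B$ begins as a disjoint union of two metric circles of length $2\pi$ (by fixed-point-freeness of $\sigma$) are also as in the paper.

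However, you flag the key step --- identifying the resulting link with the Brady link --- as ``the main obstacle'' and then do not carry it out, so the proposal as written has a genuine gap precisely where the paper's argument does its real work. The paper closes this gap with a short but essential case analysis. Hypothesis~(4) gives an involution $\tau$ on the eight link vertices $\{1,2,3,4\}\sqcup\{1',2',3',4'\}$, and one must first argue (using that the filling edges have length $\pi/3$ and that the link girth is $\geq 2\pi$) that $\tau$ cannot connect two vertices of the same circle, i.e.\ it is a bijection $\{1,2,3,4\}\to\{1',2',3',4'\}$. One then normalizes $\tau(1)=2'$ and rules out the competing patterns: $\tau(4)=1'$ is excluded directly by a short cycle, and $\tau(4)=4'$ forces $\tau(2)=3'$, $\tau(3)=1'$, which creates the cycle $2\,1\,2'\,3'$ of length $<2\pi$; hence $\tau(4)=3'$, $\tau(2)=4'$, $\tau(3)=1'$, which is exactly the Brady configuration with the correct angle labels. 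Your paraphrase ``combinatorial girth at least $6$'' is also too coarse a translation of the metric girth condition, since the link edges have two different lengths ($\pi/3$ and $2\pi/3$) and the case analysis genuinely depends on these lengths. You would need to supply this argument (or an equivalent one) to complete the proof; once done, the remaining reduction to Theorem~\ref{T - rigid} is as you describe.
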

 
 \begin{proof}
 By Theorem \ref{T - rigid} it is enough to prove that $B$ is of type $\Aut(F_2)$. Since $\sigma$ is fixed point free, the link at a vertex in $B$ contains a union of two disjoint circles of length $2\pi$. 
 
 We shall use the notation in Lemma \ref{L - link}.  Since every edge belongs to a unique systole of length 3, the systolic filling provides an involution $\tau$ of the set  $\{1,2,3,4\}\sqcup \{1',2',3',4'\}$ of vertices in the link. Since $B$ is locally CAT(0), and the edge length from the systoles are $\pi/3$, the involution $\tau$ induces a bijection from $\{1,2,3,4\}$ to $\{1',2',3',4'\}$.
 We may assume without loss of generality that $\tau(1)=2'$. By the CAT(0) condition, it follows that $\tau(4)\neq 1'$. 
 
 Suppose that $\tau(4)=4'$. Then the distance between $3$ and $3'$ is $\leq \pi$, which implies $\tau(2)=3'$ and $\tau(3)=1'$. In this case, however, the cycle $212'3'$ is of length $<2\pi$, which is a contradiction. Thus, $\tau(4)\neq 4'$. This implies that $\tau(4)=3'$.  Applying again the CAT(0) condition, we must have $\tau(2)=4'$ and $\tau(3)=1'$. 
 
 Labelling the angles of the faces as in \ts\ref{S - 2}, the above shows that the link of vertex in $B$ is label isomorphic to the link of type $\Aut(F_2)$, where the labels $t$ are associated with the systolic filling.   This implies that $B$ is of type $\Aut(F_2)$ and therefore that $X_0\simeq \tilde B_5$.
 \end{proof}

Furthermore, every (sufficiently deep) orientable torsion free subgroup of finite index is constructed in this way:

\begin{proposition}
Let $G$ be an orientable torsion free subgroup of finite index in $\Aut(F_2)$. Suppose that the injectivity radius of $X_0/G$ is  $>1$. Then there exists a finite family of tori $T_1,\ldots, T_t$ and a fixed point free involution $\sigma$ on the vertex set of $T:=\sqcup_{k=1}^t T_k$, such that $T':= T/\ip \sigma$ satisfies the condition in the previous proposition, and the systolic filling $B$ of $T'$ is isometric to $X_0/G$. 
\end{proposition}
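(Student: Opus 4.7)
The plan is to reverse-engineer the pinching-and-filling construction, starting from the compact quotient $\bar X := X_0/G$. Since $G$ acts freely, properly and cocompactly, $\bar X$ is a compact 2-complex of type $\Aut(F_2)$, and by the link description in \ts\ref{S - 2} the link at every vertex of $\bar X$ decomposes as a disjoint union of two circles of length $2\pi$ (made of lozenge corners) joined by four edges of length $\pi/3$ (coming from triangle corners). The strategy is to exploit this dichotomy: first discard the triangles to obtain the lozenge subcomplex, then unpinch it at every vertex into a disjoint union of flat tori, and finally recover the triangles as the systolic filling.

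Let $Y \subset \bar X$ be the closed subcomplex obtained by deleting every open triangular face, and let $\tilde Y$ be the 2-complex obtained from $Y$ by separating each vertex $v$ into two vertices $v_1, v_2$, one for each component of the link of $v$ in $Y$. Then every link in $\tilde Y$ is a single circle of length $2\pi$; since the 2-cells are lozenges of side 1, $\tilde Y$ is a compact flat 2-manifold without cone points. Because $G$ is orientable in the sense of \ts\ref{S - 2}, the frame of Theorem \ref{Th - frame} descends to $\bar X$ and lifts to $\tilde Y$, equipping each connected component with a coherent orientation. Gauss--Bonnet then forces each component of $\tilde Y$ to be a flat torus: write $\tilde Y = T_1 \sqcup \cdots \sqcup T_t$, and let $\sigma$ be the involution on the vertex set of $\tilde Y$ that swaps $v_1$ and $v_2$ for each $v$. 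By construction $\sigma$ is fixed-point free and $T' := \tilde Y/\ip\sigma = Y$.

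To conclude, I verify that the systolic conditions of the previous proposition hold and that its systolic filling $B$ coincides with $\bar X$. Every triangular face $\Delta \subset \bar X$ produces a 3-cycle in the 1-skeleton of $Y$, and reading off the labelled link of type $\Aut(F_2)$ shows that each edge of $\bar X$ lies in exactly one triangle (the two triangle-angle labels incident to the edge at its endpoints determine the triangle). The hypothesis that the injectivity radius of $X_0/G$ exceeds 1 rules out essential loops of length $\leq 2$ in $\bar X$, and a link-level analysis shows that any 3-cycle in $Y$ must bound a triangular face of $\bar X$. This establishes conditions (3) and (4); condition (5) is immediate since $\bar X$ is already locally CAT(0). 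The systolic filling $B$ of $Y$ then attaches an equilateral triangle along every 3-cycle of $Y$, with angle-labels dictated by the type, recovering the labelled complex $\bar X$ on the nose. Hence $B$ is isometric to $X_0/G$, as desired.

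The main obstacle is the short-loop analysis needed for conditions (3) and (4): I must rule out the existence of length-3 loops in the 1-skeleton of $Y$ that do not bound a triangle of $\bar X$, and show that each edge of $Y$ bounds a unique such triangle. Both reductions are local in the link, but they rely crucially on the injectivity radius hypothesis together with the exact connecting-map data of type $\Aut(F_2)$, since a non-facial length-3 cycle would lift to a configuration in $X_0$ whose closing-up under $G$ would either contradict CAT(0) geodesic uniqueness or force a vertex identification incompatible with the assumed injectivity radius.
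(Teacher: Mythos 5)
Your proof is correct and arrives at the same underlying decomposition of $X_0/G$ into lozenge surfaces glued along triangles, but it reaches the tori by a genuinely different route. The paper proceeds through the universal cover: it takes the lozenge-gallery equivalence class of an edge $e$, lifts it to $X_0$, observes that the convex hull $H$ of the lift is a flat plane tessellated by lozenges, projects $H$ to $X_0/G$, and blows up the double points (vertices where both link circles lie in the same class) to obtain a compact quotient $T_e$ of $H$, which is a torus because $G$ is orientable; the involution $\sigma$ is then assembled from two pieces, a partial involution $\sigma_e$ on each $T_e$ coming from double points and an involution $\sigma_0$ matching vertices across distinct tori. Your proof instead works entirely in the compact quotient: delete the open triangular faces to get $Y$, separate each vertex of $Y$ into two (one per link circle) to obtain $\tilde Y$, observe that every link of $\tilde Y$ is a circle of length $2\pi$ and every edge lies on exactly two lozenges so $\tilde Y$ is a closed flat surface, then invoke orientability and Gauss--Bonnet to conclude each component is a torus; your $\sigma$ is simply the vertex swap. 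The two constructions produce the same tori (your components of $\tilde Y$ are the paper's $T_e$), but your $\sigma$ is more uniform and your argument avoids the explicit flat-plane geometry. One point worth flagging: the orientability input (Theorem \ref{Th - frame}) is phrased in terms of flat planes $\Pi$ inside $X_0$, so to propagate a coherent orientation across an entire component of $\tilde Y$ rather than lozenge-by-lozenge one implicitly uses that each component is covered by such a plane --- which is exactly the paper's covering $H \twoheadrightarrow T_e$. A sentence making this lift explicit would tighten the Gauss--Bonnet step. Your treatment of conditions (3)--(5) is about as brief as the paper's own one-line remark, and you correctly identify where the injectivity radius hypothesis and the labelled-link data are needed.
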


\begin{proof}  We say that two edges $e$ and $f$ in $X_0$ (or $X_0/G$) are equivalent if there exists a gallery $(f_1,\ldots, f_n)$ containing them, such that $f_i$ is a losenge for every $i$. 

Let $e$ be an edge in $X_0$ and $\tilde e$ be a lift of $e$ in $X_0$. It is clear that the equivalence class $[\tilde e]$ of $\tilde e$ maps surjectively onto the equivalence class of $[e]$ under the covering map $\pi\colon X_0\surj X_0/G$. The convex hull $H$ of $[\tilde e]$ is isometric to a flat plane tessellated by lozenges. We let $T_e'$ denote the image of $H$ under $\pi$.

Say that a vertex $x\in T_e'$ is a double point if the link of $T_e'$ at $x$ is a disjoint union of circles. The map $H\to T_e'$ factorize through a map $H\to T_e\to T_e'$, where $T_e$ is obtained from $T_e'$ by blowing up every double point.  Since $X_0/G$ is compact, so is $T_e'$.  Therefore, $T_e$ is compact. Since $H\surj T_e$ is a covering map and $G$ is orientable, it follows that $T_e$ is a torus. We let $\sigma_e$ be the partially defined involution on $T_e$ inducing the quotient map $T_e\surj T_e'$. 

Let $e_1,\ldots, e_t$ be a representative set of equivalence classes of edges in $X_0/G$. Associated with the $e_k$'s are tori $T_k$ and partially defined involution $\sigma_k$ on $T_k$ such that the edge set of $T_k/\ip \sigma_k\subset X_0/G$ coincides with the equivalence class of $e_k$. 

Furthermore, for every vertex $x\in T_k$ not in the domain of $\sigma_k$, there exists a unique $k'\neq k$ such that $x$ is a vertex of $T_k'$. This defines an involution $\sigma_0$ on the complement of $\sqcup_{k} \sigma$ to itself on $T:=\sqcup_{k} T_k$.  This involution is a fixed point free involution on $T$. Since the injectivity radius of $X_0/G$ is $>1$, the systolic length of $T/\ip \sigma$  is $\geq 3$, and therefore $X_0/G$  is the systolic completion of $T/\ip \sigma$ in the sense of the previous proposition.  
\end{proof}

The map $T\to B$ can be viewed as a structure of ``space with jumps'' on the torus (or union of tori) $T$. The geodesics with respect to such a structure in $T$ are allowed to jump between certain transverse codimension 1 subspaces they cross (in the present situation, it is the 1-skeleton, which are the sides of the triangles). The length of the jump, and the incidence angles are described by the geometry of the added triangles. A pinching occurs along a codimension 2 subspaces (intersection of codimension 1 subspaces), which are singular sets, corresponding to instantaneous jumps of a geodesic between two points in $T$.  We will not attempt to formalize this notion further in the present paper. 

\begin{remark} 
The number $t$ of tori, and the geometric parameters of the individual tori, provide conjugacy invariants for the given subgroup $G$. As mentioned in the introduction, the description of the family of subgroups with a prescribed invariant, e.g., the torsion free finite index subgroups $G$ of $\Aut(F_2)$ with  a given torus number $t(G)$, seems rather involved however.
\end{remark}
\section{A group cobordism for $\Aut(F_2)$}\label{S - 3}

In this section we show that the surgery techniques from \cite{surg}  which were used to construct (in many cases, infinitely many) groups of a given  type, can be applied to the group $\Aut(F_2)$. (Indeed, this is how the toric presentation in \ts \ref{S - 1} and the groups $G_n$ were found.)

Let $A$ be a (e.g., labelled) type. A category $\Bord_A$ of group cobordisms of type $A$ can be defined as follows.  The objects in this category are called collars, and the arrows, group cobordisms; in the present paper we only discuss the case where $A$ is the type $\Aut(F_2)$ defined in \ts \ref{S - 2}. 

Let us first review the notion of collar.
 An (abstract) \emph{open collar} is a topological space of the form $H\times (0,1)$ where $H$ is a graph (not necessarily connected).
If $X$ is a 2-complex, an open collar in $X$ is, by definition, an embedding $C\colon H\times (0,1)\inj X$. We shall refer to the domain $H\times (0,1)$ as the abstract collar defining  $C$. The \emph{dual} of an open collar of $X$ is the open collar $C'\colon H\times (0,1)\inj X$ defined by $C'(x,t):=C(x,1-t)$. The \emph{collar closure} of $C$  the topological closure $\overline C$ of the image of $C$ in $X$;
the \emph{span} of $C$ in  $X$ is the set $\spn(C)$  of vertices of $X$ contained in collar closure of $C$; the \emph{simplicial closure} of $C$ is  is the union of  all the open edges and open faces it intersects.  As in \cite{surg} we  only consider collars which are simplicially closed and
 vertex free.

We shall denote by  $\Bord_{\Aut(F_2)}$ the category of group cobordisms of type $\Aut(F_2)$.  
 We construct an object $C$ in $\Bord_{\Aut(F_2)}$ as follows.  

Fix an integer $y\in \IN$. We use the notation introduced at the end of \ts \ref{S - 1}. We will view $C$ as a ``slice'' of the cylinder $T_\infty$. We fix four letters $A_y,B_y,C_y,D_y$ respectively on  $((x,y),(x,y-1))$ where $x=0,2,4$ and on $((1,y),(2,y))$.
Recall that for every letter $L$ on $((x,y),(x,y-1))$, where $x$ is even, we write labels $L'$ and $L''$ on, respectively, $((x,y-2),(x+1,y-2))$ and   $((x+1,y-3),(x+1,y-4))$, while for a letter $L$ on $((1,y),(2,y))$, we write labels $L'$ and $L''$ on, respectively, $((3,y-2),(4,y-2))$ and   $((5,y-4),(6,y-4))$. 

By definition, the cylinder $T_\infty$ is  a quotient of a strip $[0,6]\times \IR$ using the twist 
$\tau_{-6}$ in the vertical direction. Recall that a gallery is a sequence of faces $(f_1,\ldots, f_n)$ such that $f_i\cap f_{i+1}$ is an edge. 

We say that a gallery in $T_\infty$ is \emph{generating} if it is closed (i.e., cyclic permutations remain galleries) and homotopic to an element generating  $\pi_1(T_\infty)$ . 

\begin{lemma} The minimal generating gallery has length $n=12$.   
\end{lemma}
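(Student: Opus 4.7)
The plan is to lift the question to the universal cover of $T_\infty$, which is a Euclidean plane tessellated by unit $60^\circ/120^\circ$ lozenges, and reduce it to a shortest-path computation in the dual graph of the tiling.

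First I set up coordinates. Since $T_\infty$ is the quotient of the strip $[0,6]\times\IR$ by the twist $\tau_{-6}$, its universal cover is the standard lozenge tessellation of $\IR^2$ with vertex lattice spanned by $e_1=(1,0)$ and $e_2=(\cos 60^\circ,\sin 60^\circ)$. I label each lozenge as $L(a,b)$ for $(a,b)\in\IZ^2$, where $L(a,b)$ has corners $(a,b),(a+1,b),(a+1,b+1),(a,b+1)$ in the $(e_1,e_2)$-basis. In these coordinates the deck transformation group is generated by the single translation $L(a,b)\mapsto L(a+6,b-6)$ coming from $\tau_{-6}$, so $\pi_1(T_\infty)\cong\IZ$ is generated by the loop corresponding to this translation.

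Next I observe that the dual graph of the tessellation in the universal cover is the standard square lattice: each lozenge $L(a,b)$ has its four sides respectively shared with $L(a\pm 1,b)$ and $L(a,b\pm 1)$, and with no other lozenge. Consequently, a closed gallery of length $n$ in $T_\infty$ representing a generator of $\pi_1$ lifts to a walk of $n$ edges in this lattice from some $L(a,b)$ to its deck translate $L(a+6,b-6)$, and hence $n\geq |6|+|-6|=12$ by the $\ell^1$-distance lower bound. For the upper bound I exhibit an explicit gallery: the lift consisting of the horizontal sweep $L(0,0),L(1,0),\dots,L(6,0)$ followed by the vertical descent $L(6,-1),L(6,-2),\dots,L(6,-6)$ projects to a closed gallery of length $12$ in $T_\infty$ whose homotopy class is the chosen generator.

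The main technical point I expect to need care is confirming that the dual graph is literally the bare square lattice, with no extra adjacencies produced by the $\tau_{-6}$-quotient. This is immediate from the fact that every nontrivial deck translate moves $L(a,b)$ to $L(a+6k,b-6k)$ with $k\neq 0$, so the $\ell^1$-distance between the two is at least $12$, well above the single-edge threshold; no ``shortcut'' adjacencies are introduced in passing to $T_\infty$.
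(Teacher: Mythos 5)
Your proof is correct and takes essentially the same approach as the paper's one-line argument, which identifies $T_\infty$ with the strip $[0,6]\times\IR$ quotiented by $\tau_{-6}$ and reads off the gallery distance from a boundary edge to its twisted image as $6+6=12$. You simply make explicit the underlying mechanism (the dual of the lozenge tessellation in the universal cover is a square lattice, and the generating deck translate sits at $\ell^1$-distance $12$), which is precisely what the paper's count $12=6+6$ is asserting.
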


\begin{proof}
Indeed, writing $T_\infty$ as a quotient of a strip $[0,6]\times \IR$ of size $6\times \infty$ by $\tau_{-6}$, the gallery distance between a boundary edge on $\{0\}\times \IR$  and its image by $\tau_{-6}$ in $\{6\}\times \IR$ is $12=6+6$.
\end{proof}

The collar $C$ will be built from a minimal generating gallery on $T_\infty$. Starting from the edge labelled $A_y$, the gallery is defined by the succession of edges $f_i\cap f_{i+1}$. The edges have the following labels: 
\[
A_y, A_{y+1}',A_y', A_{y+1}'',B_{y-2}, B_{y-1}',B_{y-2}', B_{y-1}'',C_{y-4}, C_{y-3}',C_{y-4}', C_{y-3}''
\]   
Note the corresponding gallery $(f_1,\ldots, f_{12})$ is closed: every change of letter occurs with a drop of $-2$ for a total drop of $-6$, which is consistent with $\tau_{-6}$. This defines a ``zig-zag'' gallery  generating $\pi_1(T_\infty)$. 

As a topological space the gallery $(f_1,\ldots, f_{12})$ is homeomorphic to $[0,1]\times \SI^1$. We shall refer to the gallery minus its boundary as  open.  

\begin{definition}
Let $C$ be the union of
\begin{enumerate}
\item the image of the open generating gallery $(f_1,\ldots, f_{12})$ in the basic construction $B_\infty$.
\item the triangles in  $B_\infty$ associated with the following six triples (knights) $K=(L,L',L'')$ on the  letters 
\[
L=A_{y+1}, A_{y}, B_{y-1},B_{y-2}, C_{y-3},C_{y-4}
\]
where every triangle associated with a triple $K$ is semi-open, in the sense that it does not contain the (unique) edge not belonging to the image of the gallery.
\end{enumerate}
\end{definition}

\begin{lemma}
$C$ is a product space. 
\end{lemma}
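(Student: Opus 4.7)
The plan is to build an explicit homeomorphism $C \cong H \times (0,1)$ for a finite graph $H$, thereby realizing $C$ as an (abstract) open collar. The open generating gallery is already homeomorphic to $(0,1) \times \SI^1$; under this identification the $12$ interior edges $f_i \cap f_{i+1}$ of the gallery become $12$ open vertical columns $\{q_i\} \times (0,1)$ over $12$ marked points $q_1, \dots, q_{12} \in \SI^1$, and the $12$ open lozenge interiors become the open rectangles $(q_i, q_{i+1}) \times (0,1)$.

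The core step is to check that each of the six semi-open triangles is itself homeomorphic to the strip $[0,1] \times (0,1)$. Since collars are required to be simplicially closed and vertex free, each semi-open triangle consists exactly of the open $2$-cell together with the two open $1$-cells lying in the gallery — excluding both the missing open edge and all three $0$-cells. Taking a model triangle with apex $(1/2,1)$ and missing edge on $t=0$, the formula $(s,t) \mapsto (t/2 + s(1-t),\, t)$ defines a homeomorphism from $[0,1]\times(0,1)$ onto the semi-open triangle which sends the two vertical sides $\{0\}\times(0,1)$ and $\{1\}\times(0,1)$ onto its two open gallery edges.

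Reading off the list of $12$ interior gallery edges, the pairs of marked points bridged by the six triangles are $\{q_1,q_3\}$ and $\{q_2,q_4\}$ (knights $A_y$ and $A_{y+1}$), $\{q_5,q_7\}$ and $\{q_6,q_8\}$ (knights $B_{y-2}$ and $B_{y-1}$), and $\{q_9,q_{11}\}$ and $\{q_{10},q_{12}\}$ (knights $C_{y-4}$ and $C_{y-3}$). Let $H$ be the graph on vertices $q_1,\dots,q_{12}$ whose edges are the $12$ consecutive arcs of $\SI^1$ together with these six chords. Gluing the open-gallery homeomorphism with the six triangle homeomorphisms — which agree on the shared vertical columns by construction — then produces the desired homeomorphism $C \cong H \times (0,1)$.

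The part that needs the most care is the second step: one must be precise about which cells of $B_\infty$ make up each semi-open triangle (one open $2$-cell, two open gallery $1$-cells, no $0$-cell, no missing $1$-cell) so that its homeomorphism type is that of a strip $[0,1]\times(0,1)$ rather than a cone or a region with a corner at the apex. Once this is pinned down, the remaining compatibility check along the common columns is routine.
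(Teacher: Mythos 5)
Your proof takes essentially the same approach as the paper's: identify the open gallery with $(0,1)\times\SI^1$ and observe that the semi-open knight triangles add chords to $\SI^1$, yielding $H\times(0,1)$. The paper's proof is a two-sentence assertion of exactly this; your version fills in the details (explicit strip homeomorphism for each semi-open triangle, and the identification of the six bridged pairs $\{q_1,q_3\},\{q_2,q_4\},\{q_5,q_7\},\{q_6,q_8\},\{q_9,q_{11}\},\{q_{10},q_{12}\}$, which matches the chord pattern $(n,n+2)$, $n\equiv 0,1\bmod 4$, stated in the subsequent lemma).
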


\begin{proof}
It is clear that the open gallery is a product space homeomorphic to $(0,1)\times \SI^1$. Under this identification, the added triples $K$ define a space of the form $(0,1)\times H$ where $H$ is a finite graph (the nerve) obtained by adding 6 edges to $\SI^1$. 
\end{proof}

One can of course give an explicit description of $H$:

\begin{lemma}
The graph $H$ is isomorphic to the Cayley graph of $\ZI/12\ZI$, with respect to 1, together with an additional edge $(n,n+2)$ for every $n\equiv 0,1 \mod 4$.
\end{lemma}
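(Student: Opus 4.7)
The plan is to show that, under the identification of the vertex set of $H$ with $\IZ/12\IZ$ provided by the cyclic order of shared edges of the gallery, the six additional edges produced by the six semi-open triangles land in exactly the positions prescribed by the lemma.

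First, in the preceding lemma we have already established the product structure $C \cong (0,1) \times H$, under which each $2$-cell of $C$ corresponds to an edge of $H$ and each ``vertical'' 1-cell to a vertex of $H$. The twelve faces $f_1, \ldots, f_{12}$ of the gallery contribute the twelve edges of a $12$-cycle, whose vertices we label $0, 1, \ldots, 11$ in order according to the shared edges
\[
0: A_y, \ 1: A_{y+1}', \ 2: A_y', \ 3: A_{y+1}'', \ 4: B_{y-2}, \ 5: B_{y-1}', \ 6: B_{y-2}', \ 7: B_{y-1}'', \ 8: C_{y-4}, \ 9: C_{y-3}', \ 10: C_{y-4}', \ 11: C_{y-3}''.
\]
With this labelling, the cyclic gallery edges are exactly the Cayley edges of $\IZ/12\IZ$ with respect to $1$.

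Next, I would analyze the six semi-open triangles. Each such triangle, associated with a triple $K = (L,L',L'')$ for
\[
L \in \{A_{y+1}, A_y, B_{y-1}, B_{y-2}, C_{y-3}, C_{y-4}\},
\]
is glued to the gallery along its two edges that lie in the image of the gallery, and the third (``missing'') edge does not. Hence in the product structure each triangle contributes a single additional edge of $H$ joining the two vertices that correspond to its two in-gallery edges. The task is thus to identify, for each of the six triples, which two of the three edges $L,L',L''$ occur among the twelve gallery labels listed above.

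A direct inspection of the gallery reveals a uniform pattern: inside each of the three ``blocks'' $A, B, C$, exactly two of the four letters involved are primed/double-primed in matching ways. Concretely, the $A$-block in the gallery consists of $\{A_y, A_y', A_{y+1}', A_{y+1}''\}$, missing $A_y''$ and $A_{y+1}$; the $B$-block is $\{B_{y-2}, B_{y-2}', B_{y-1}', B_{y-1}''\}$, missing $B_{y-2}''$ and $B_{y-1}$; and the $C$-block is $\{C_{y-4}, C_{y-4}', C_{y-3}', C_{y-3}''\}$, missing $C_{y-4}''$ and $C_{y-3}$. Carrying out the check triple by triple yields the six additional edges
\[
(0,2),\ (1,3),\ (4,6),\ (5,7),\ (8,10),\ (9,11),
\]
that is, one edge $(n, n+2)$ for each $n \in \{0,1,4,5,8,9\} = \{n \in \IZ/12\IZ : n \equiv 0 \text{ or } 1 \!\!\! \pmod 4\}$, which is the desired description.

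The main obstacle is the bookkeeping in the second paragraph: one must make sure the identification of the vertex indices with $\IZ/12\IZ$ is set up so that the positions of the ``missing'' labels come out to be exactly $\{2, 1, 6, 5, 10, 9\}$ modulo the cyclic gallery structure, which is really the only place where the specific ordering of letters in each block plays a role. Everything else reduces to the structural fact that each semi-open triangle contributes precisely one extra edge of $H$, joining two non-adjacent vertices of the $12$-cycle.
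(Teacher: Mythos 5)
The paper states this lemma without proof, so there is no argument in the text to compare against. Your verification is correct and is the natural direct one: once the preceding lemma gives the product structure $C \cong (0,1)\times H$, vertices of $H$ correspond to the twelve shared edges $f_i\cap f_{i+1}$ of the generating gallery (giving the $12$-cycle) and each of the six semi-open triangles, having exactly two of its three sides among those shared edges, contributes one chord. Reading off from the paper's list $A_y, A_{y+1}',A_y', A_{y+1}'', B_{y-2}, B_{y-1}', B_{y-2}', B_{y-1}'', C_{y-4}, C_{y-3}', C_{y-4}', C_{y-3}''$ and from the six choices $L\in\{A_{y+1},A_y,B_{y-1},B_{y-2},C_{y-3},C_{y-4}\}$, you correctly locate the in-gallery pairs at positions $(1,3),(0,2),(5,7),(4,6),(9,11),(8,10)$, i.e.\ the chords $(n,n+2)$ for $n\in\{0,1,4,5,8,9\}=\{n:\ n\equiv 0,1\bmod 4\}$. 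The only content is the bookkeeping in your second paragraph, and it is carried out correctly.
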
  

Therefore, we may view $C$ as a open collar in $B_\infty$ under the identity mapping $C\to B_\infty$.

\begin{lemma}
$C$ is a full collar in $B_\infty$ 
\end{lemma}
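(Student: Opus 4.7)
The plan is to unpack the notion of \emph{full collar} from \cite{surg} and verify the required condition directly for $C$. Roughly speaking, a collar $C$ is full when every face of the ambient 2-complex $B_\infty$ that meets the span $\spn(C)$ is already contained in the simplicial closure of $C$; equivalently, for every vertex $v \in \spn(C)$, the entire star of $v$ in $B_\infty$ is captured by $\overline C$ (modulo the prescribed open/semi-open conventions for collars). So the goal is to enumerate $\spn(C)$ and its stars and match them against the data of the gallery plus the six knights.

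First, I would describe $\spn(C)$ explicitly. The open gallery $(f_1,\ldots,f_{12})$ meets the vertex set of $T_\infty$ along the endpoints of the twelve listed boundary-edges
\[
A_y, A_{y+1}', A_y', A_{y+1}'', B_{y-2}, B_{y-1}', B_{y-2}', B_{y-1}'',
 C_{y-4}, C_{y-3}', C_{y-4}', C_{y-3}'',
\]
whose $y$-coordinates range in $\{y-4,y-3,y-2,y-1,y,y+1\}$; under the sewing map $T_\infty \to B_\infty$ these are identified in pairs by the involution $\sigma$ from \ts\ref{S - 1}, giving a finite, explicit set of vertices of $B_\infty$. Next, I would classify the faces of $B_\infty$ incident to a vertex $v \in \spn(C)$: each such face is either (a) a lozenge of $T_\infty$ or (b) a triangle attached to a knight $K=(L,L',L'')$. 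For (a), the gallery is the unique zig-zag of lozenges climbing from height $y-4$ to height $y+1$, and the twelve faces $f_1,\ldots,f_{12}$ exhaust the lozenges whose four vertices all lie in $\spn(C)$; the other lozenges adjacent to $\spn(C)$ stick out of the gallery only across one of the twelve boundary edges, and those boundary edges are precisely what is \emph{excluded} by the openness of the gallery, so the collar-closure/openness bookkeeping is consistent. For (b), the rules $L \leadsto L'$ (drop $2$) and $L'\leadsto L''$ (drop $2$) mean that a knight with top letter at height $z$ touches vertices with $y$-coordinates in $\{z, z-1, z-2, z-3, z-4\}$; hence only the six letters $A_{y+1}, A_y, B_{y-1}, B_{y-2}, C_{y-3}, C_{y-4}$ give knights whose triangles have all three edges on $\spn(C)$, and these are precisely the six triangles included in $C$.

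The final step is to verify that the semi-openness convention (each triangle omits the unique edge not in the gallery) does not leave any vertex of $\spn(C)$ with an uncovered face: the excluded edge of a knight triangle lies \emph{outside} the gallery image and so contributes nothing to the star of a span vertex lying on the gallery. Combining this with the identification by $\sigma$, each $v \in \spn(C)$ has its full incident star realized by the union of lozenges from the gallery and the triangles from the six knights, which is exactly $\overline C$. This establishes fullness.

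The main obstacle I expect is bookkeeping rather than conceptual content: one must check that the $\sigma$-identifications do not accidentally glue a span vertex to a vertex outside $\spn(C)$ whose incident star would then produce a missing face. The explicit formulas for $\sigma$ and for the knight-permutation $(1,2,4,3)$ from \ts\ref{S - 1} let one verify, vertex-by-vertex, that both $\sigma$-preimages of each span vertex contribute only lozenges already in the gallery and only knights already among the six chosen ones; once this compatibility is confirmed, fullness follows automatically.
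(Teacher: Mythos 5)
Your proposed definition of ``full collar'' is not the one the paper uses, and it would in fact fail as stated. Recall that the paper's collars are \emph{vertex free}: $C$ contains only open faces and open edges, no vertices. Fullness is therefore an openness condition checked along the open \emph{edges} of $C$, not along vertex stars. Your criterion---that for every vertex $v\in\spn(C)$ the entire star of $v$ in $B_\infty$ lies in $\overline C$---is simply false here: a span vertex sits on the boundary circles of the open gallery, and its star contains lozenges of $T_\infty$ lying outside the gallery as well as knight triangles other than the six chosen ones; none of these are in $\overline C$. The parenthetical ``modulo the prescribed open/semi-open conventions'' does not rescue this, because those conventions govern which cells are in $C$, not which cells are in the star of a boundary vertex.

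The paper's argument is shorter and aimed at the right target. The only open edges contained in $C$ are the twelve gallery edges $e=f_i\cap f_{i+1}$. Each such edge lies in exactly three faces of $B_\infty$: the two lozenges $f_i$, $f_{i+1}$ and the unique triangle of the knight triple $K$ containing $e$. One checks directly that the twelve gallery edges
\[
A_y,\ A_{y+1}',\ A_y',\ A_{y+1}'',\ B_{y-2},\ B_{y-1}',\ B_{y-2}',\ B_{y-1}'',\ C_{y-4},\ C_{y-3}',\ C_{y-4}',\ C_{y-3}''
\]
are exactly the $12=6\times 2$ gallery-edges of the six knights on the letters $A_{y+1},A_y,B_{y-1},B_{y-2},C_{y-3},C_{y-4}$, so the triangle through each $e$ is one of the six semi-open triangles included in $C$. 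Hence every point of $e$ has an open neighbourhood inside $C$, which is what fullness means. You do implicitly verify, in your case (b), that only these six knights contribute triangles meeting the gallery, so the combinatorial bookkeeping you propose overlaps with what is needed---but the argument must be recast at the level of open edges, not span vertices, for the conclusion to follow.
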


\begin{proof}
Every open edge $e=f_i\cap f_{i+1}$ in $C$ belongs to a (unique) triple $K$, and therefore every point in $e$ has an open neighbourhood  included in $C$.  
\end{proof}

Since $B_\infty$ is a complex of type $\Aut(F_2)$, the above shows that the isomorphism class of $C$ is an object in the category $\Bord_{\Aut(F_2)}$.

The arrows in $\Bord_{\Aut(F_2)}$ are group cobordisms:

\begin{definition}\label{D - group cobordisms}
A \emph{group cobordism} is a 2-complex $B$ together with a pair $(C,D)$ of collars of $B$ whose boundaries $\del^-C$ and $\del^+ D$ form a partition of the topological boundary of $B$:
\[
\del B= \del^-C\sqcup \del ^+ D.
\] 
\end{definition} 

Let us  construct the group cobordism $B$ of type $\Aut(F_2)$. The collar $C$  depends on $y\in \IN$, however, it is clear that $C_y\simeq C_{y+1}$. The cobordism $B$ has $C$ as domain and codomain.

\begin{definition}\label{D - cobordism def}
Let $B$ be the union of 
\begin{enumerate}
\item $C_y\cup C_{y+1}$
\item the closed triangle in $B_\infty$ associated with the triple $K=(L,L',L'')$ on the  letters $L=D_{y-2}$.
\end{enumerate}
\end{definition}
Again, $B$ depends on $y$, where $B_y$ is isomorphic to $B_{y+1}$ and defines a unique arrow, again denoted $B$, in $\Bord_{\Aut(F_2)}$.  
The inclusion map $L_B,R_B\colon C\to B$ (left and right collar boundary) and the obvious inclusion of $C$ as $C_y$ and $C_{y+1}$. 

In particular:

\begin{theorem} The map taking 1 to $B$ induces a unital  inclusion 
$\IN\to \Bord_{\Aut(F_2)}$. 
\end{theorem}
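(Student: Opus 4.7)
The statement to verify has three substantive parts: that $B$ is truly an arrow $C\to C$ in $\Bord_{\Aut(F_2)}$, that the assignment $n\mapsto B^n$ respects composition and sends $0$ to $\id_C$, and that it is injective. The plan is to address each in turn.

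First I would check the cobordism property of $B$. From Definition \ref{D - cobordism def}, $B_y$ is obtained from $C_y\cup C_{y+1}$ by attaching the single closed triangle labelled by the knight $K=(D_{y-2},D_{y-2}',D_{y-2}'')$. An explicit inspection of which edges of the semi-open triangles of $C_y$ at level $y-2$ remain missing shows that each is supplied either by $C_{y+1}$ or by the new $D$-triangle, and symmetrically for $C_{y+1}$. Hence the topological boundary of $B_y$ decomposes as $\del^-C_y\sqcup \del^+C_{y+1}$, in the sense of Definition \ref{D - group cobordisms}. Because $B_\infty$ carries a $\IZ$-action by vertical translation $y\mapsto y+1$, the collars $C_y$ and $C_{y+1}$ are canonically isomorphic to the fixed abstract collar $C$, so $B$ is indeed an arrow $C\to C$.

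Next I would describe the $n$-fold composition $B^n$ concretely as the sub-2-complex of $B_\infty$ consisting of $C_y\cup C_{y+1}\cup\cdots\cup C_{y+n}$ together with the $n$ closed $D$-triangles at levels $y-2,\,y-1,\,\ldots,\,y+n-3$. Associativity is automatic since composition is realised by gluing along a full collar inside the common ambient complex $B_\infty$, and unitality is realised by declaring $B^0:=\id_C$ to be the trivial cobordism $C\to C$ whose underlying space is $C$ with its two boundary copies identified; the relation $B\cup_C B^0=B$ then holds tautologically. Consequently $n\mapsto B^n$ is a monoid homomorphism $(\IN,+,0)\to \Bord_{\Aut(F_2)}(C,C)$.

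Finally, injectivity I would establish by extracting a numerical invariant. The number of closed $D$-triangles in $B^n$ is precisely $n$, and this count is preserved by any isomorphism of labelled complexes of type $\Aut(F_2)$ carrying one boundary collar onto the other, since such an isomorphism must preserve the shape decomposition and the angle labelling. Equivalently, $B^n$ deformation retracts onto a portion of gallery of length $12n$ in $B_\infty$, and the vertical height $n$ is a topological invariant of the pair (underlying complex, ordered pair of collar boundaries). Hence $B^n\simeq B^m$ forces $n=m$. The main obstacle I expect is local, in the first step: namely the careful bookkeeping of which half-edges of the semi-open triangles in $C_y$ and $C_{y+1}$ are completed by the added $D$-triangle, so that the gluing neither leaves nor creates phantom pieces. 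Once that local verification is in place, composition, unitality, and injectivity all follow from the explicit description of $B^n$ as a height-$n$ subcomplex of $B_\infty$.
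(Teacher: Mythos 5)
Your proposal is correct and follows essentially the same approach as the paper's own proof, which is extremely terse: the paper simply asserts ``$B^{\circ n}\neq B^{\circ m}$ if $n\neq m$'' and leaves the well-definedness of $B$ as a cobordism, the monoid-homomorphism property, and the justification for the inequality to the reader (treating them as clear from Definitions \ref{D - group cobordisms} and \ref{D - cobordism def}). What you do differently is spell all of this out: you identify the correct concrete model $B^{\circ n}$ as the height-$n$ subcomplex $C_y\cup\cdots\cup C_{y+n}$ of $B_\infty$ with the $n$ added $D$-triangles, and you supply an explicit invariant (the count of closed $D$-triangles, equivalently the vertical height) that separates the $B^{\circ n}$. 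This buys a complete and checkable argument where the paper offers only an assertion; the one place you rightly flag as needing care---the local bookkeeping of which semi-open edges of $C_y$ and $C_{y+1}$ are completed by the added $D$-triangle so that $\del B=\del^-C_y\sqcup\del^+C_{y+1}$---is exactly the verification the paper elides, and carrying it out would make the argument fully rigorous.
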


\begin{proof}
Indeed, $B^{\circ n}\neq B^{\circ m}$ if $n\neq m$, where $B^{\circ n}$ refers to the $n$-fold composition $B\circ \cdots \circ B$ in $\Bord_{\Aut(F_2)}$.
\end{proof}  

In the language of \cite{surg}, the above shows the following:

\begin{theorem}
 $\Aut(F_2)$ is virtually accessible by surgery.
\end{theorem}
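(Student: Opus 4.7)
The plan is to close a composition of $B$ with itself in $\Bord_{\Aut(F_2)}$ to obtain a closed 2-complex of type $\Aut(F_2)$ whose fundamental group is a finite-index subgroup of $\Aut(F_2)$. First I would form the $n$-fold composition $B^{\circ n}$, which by the preceding theorem is nontrivial and distinct from $B^{\circ m}$ for $n \neq m$. Using the canonical isomorphism between its domain collar $C_y$ and its codomain collar $C_{y+n}$ (both abstractly isomorphic to the object $C$), the cobordism $B^{\circ n}$ can then be ``closed'' to produce a closed 2-complex $\widehat B_n$, with no remaining boundary collars.

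The next step is to identify $\widehat B_n$ with the complex $B_n$ defined at the end of Section \ref{S - 1}. Unwinding Definition \ref{D - cobordism def} and the description of the collar $C$ as a slice of $T_\infty$, the iterated cobordism $B^{\circ n}$ realizes a strip of vertical height $n$ in the cylinder $T_\infty$ together with all the triangles associated with the knights at the $n$ intermediate $y$-levels. Closing this strip along its two boundary collars then corresponds precisely to taking the quotient of $B_\infty$ by the subgroup $n\ZI$ acting as in Section \ref{S - 1}, whose quotient is $B_n$ by construction.

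Once this identification is in hand, the conclusion is immediate: since $\widehat B_n \cong B_n$ is of type $\Aut(F_2)$, Theorem \ref{T - rigid} implies that its universal cover is isomorphic to the Brady complex $X_0$, and the Corollary immediately following Theorem \ref{T - rigid} yields that $G_n = \pi_1(B_n)$ is of finite index in $\Aut(F_2)$. This exhibits a finite-index subgroup of $\Aut(F_2)$ as the fundamental group of a closed surgery in $\Bord_{\Aut(F_2)}$, which is precisely what ``virtually accessible by surgery'' requires in the language of \cite{surg}.

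The main obstacle I anticipate is the bookkeeping in the identification $\widehat B_n \cong B_n$: one must verify that the abstract closure of $B^{\circ n}$ in $\Bord_{\Aut(F_2)}$ matches, label by label, the Dehn twist $\tau_{-6}$ used in the vertical identification defining $T_n$, and that the $D_\bullet$-triples added by $B$ together with the $A_\bullet, B_\bullet, C_\bullet$-triples already present in the collars $C_y$ account for exactly the $4n$ knights used in the definition of $B_n$. After this combinatorial check, the remaining steps are direct consequences of Theorem \ref{T - rigid} and the finite-index corollary already established in Section \ref{S - 2}.
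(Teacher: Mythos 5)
Your proposal is correct and takes essentially the same approach as the paper: the paper's argument is precisely that the groups $G_n=\pi_1(B_n)$, already shown in \S2 to be of finite index in $\Aut(F_2)$, arise as fundamental groups of the closures of the iterated cobordism $B^{\circ n}$ in $\Bord_{\Aut(F_2)}$. You simply spell out the identification $\widehat B_n\cong B_n$ and the appeal to Theorem \ref{T - rigid} and its corollary, which the paper leaves implicit.
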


This means that $\Aut(F_2)$ admits a finite index subgroup  which is the fundamental group of a complex obtained by a surgery construction in a cobordism category (see \cite[\ts 10]{surg}).  Here the groups $G_n$ are of finite index in $\Aut(F_2)$ and the fundamental groups of the complexes $B_n$, which are of type $\Aut(F_2)$ defined by a surgery construction in $\Bord_{\Aut(F_2)}$.

We take this opportunity to make a correction to \cite[Lemma 17]{autf2puzzles}. At the bottom of the page it is stated  that ``there are two extensions of this section'': it should be ``three extensions''. Namely, in the first case (when the lozenges on the south-east triangles are oriented pointing south) one extension is the 3-strip, as indicated, which amounts to extending the lozenges with two triangles. A third sort of extension uses lozenges instead. In this case, the lozenges belong to a (using the terminology in \cite{autf2puzzles}) semi-infinite $\diamond$-strip of type $2\times \infty$. This can be visualized using the surgery construction above: starting from the closed triangle defined in $B$ above, Def.\ \ref{D - cobordism def}, (2), one may use three lozenges belonging to a single collar (either all belonging to $C_y$, or all in $C_{y+1}$) which can be extended into three semi-infinite $\diamond$-strip of type $2\times \infty$ in the universal cover (so the resulting puzzle has an order 3 symmetry). 

\section{Complements to Theorem \ref{T - rigid}} \label{S - 4}

We conclude some remarks on Theorem \ref{T - rigid}, regarding spaces locally isometric to $X_0$.  
It is an interesting exercise to construct groups acting freely uniformly on a CAT(0) 2-complex locally isometric to the 2-complex $X_0$ of $\Aut(F_2)$ (but not isometric to it), in the sense that their link are isometric to the link of $X_0$. In the present section, we provide one example. 

By Theorem \ref{T - rigid} such a complex $X'$ is not of type $\Aut(F_2)$.  The example will be of the following type. 

Let $A$  denote the metric type (i.e., a set of metric graphs, and a sets of shapes) defined by:  
\begin{enumerate}
\item Graph:  the link of the Brady complex with the angular metric (see \ts\ref{S - 2}). 
\item Shapes: an equilateral triangle, and an hexagon with sides of length 1. Both are viewed as standard polygons in the Euclidean plane with the induced metric.  
\end{enumerate}

By definition, every CAT(0) 2-complex of type $A$ is locally isometric to $X_0$ but not isometric to it.

\begin{proposition}
There exists a group $G'$ acting freely uniformly isometrically on a CAT(0) 2-complex $X'$ of type $A$.
\end{proposition}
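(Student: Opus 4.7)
The plan is to mimic the toric pinching--and--filling construction of \ts\ref{S - 1}, replacing the lozenges by regular hexagons of side length $1$. The two arc lengths appearing in the Brady link are $\pi/3$ and $2\pi/3$, which match exactly the corner angles of equilateral triangles ($\pi/3$) and regular hexagons ($2\pi/3$); so at the level of lengths the Brady link is a priori compatible with a complex built from these two shapes. Concretely, I would start from a flat torus $T^h$ tiled by regular hexagons of suitable dimensions, choose a labelling of its oriented edges, fix a fixed point free involution $\sigma$ on its vertex set playing the role of the ``jumps'' from \ts\ref{S - 1}, and attach equilateral triangles along triples of edges in $T^h/\langle\sigma\rangle$ to obtain a closed $2$-complex $Y$.

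The central verification is that $\sigma$ and the triangle-attachment data can be chosen so that every vertex link in $Y$ is isometric, as a metric graph, to the Brady link. This is a finite combinatorial problem: around each vertex of $Y$ one has a prescribed cyclic arrangement of hexagonal corners (contributing $2\pi/3$ arcs in the link) and triangle corners (contributing $\pi/3$ arcs in the link), and one must arrange this pattern so as to reproduce the Brady link. Once such a $Y$ is in hand, the fact that the Brady link has systole $\geq 2\pi$ (as $X_0$ is CAT(0)) implies that $Y$ satisfies Gromov's link condition at every vertex, so the universal cover $X':=\widetilde Y$ is CAT(0). By construction $X'$ is of type $A$, and $G':=\pi_1(Y)$ acts freely, properly discontinuously, cocompactly, and isometrically on $X'$. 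Moreover $X'$ is not isometric to $X_0$: otherwise by Theorem \ref{T - rigid} it would be of type $\Aut(F_2)$, contradicting the presence of a hexagonal face.

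The main obstacle is the combinatorial construction of $Y$, i.e.\ pinpointing an involution $\sigma$ and a set of triangle triples so that the link realised at every vertex is \emph{exactly} the Brady link, and not merely some other graph with systole $\geq 2\pi$ assembled from the same arc lengths. This is analogous to, but more delicate than, the verification in Lemma \ref{L - link}, since one must simultaneously control the edge-adjacency pattern and the angular measures. In practice I would look for a minimal example built from a single hexagonal torus together with only a handful of triangle triples, chosen so that the induced permutation between the two halves of the link reproduces the $4$-cycle $(1,2,4,3)$ that governs the Brady link.
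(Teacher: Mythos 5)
There is a genuine gap in the proposal, and it lies exactly in the part you defer as ``the main obstacle.'' A flat torus tiled by regular hexagons is the honeycomb tiling, so every vertex of $T^h$ has valence~$3$: exactly three edges and three hexagonal corners (each of angle $2\pi/3$) meet there. If $\sigma$ is a fixed-point-free involution, a vertex of $T^h/\langle\sigma\rangle$ therefore has exactly $6$ incident edges, and its link is a disjoint union of two circles of length $2\pi$, each subdivided into \emph{three} arcs of length $2\pi/3$. Attaching equilateral triangles along triples of already-existing edges adds bridges (link \emph{edges}) but no new link \emph{vertices}, so the link of every vertex of $Y$ would have at most $6$ vertices. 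The link of the Brady complex, however, is a cubic graph on $8$ vertices (this is exactly the ``regular graph of order~$8$'' built on $6$ vertices in this section, and it matches the description $\{1,2,3,4\}\sqcup\{1',2',3',4'\}$ used in \ts\ref{S - 1.5}, where the two constituent circles are $4$-cycles with arcs alternating $\pi/3, 2\pi/3$). No choice of $\sigma$ or of triangle triples can fix this count: a $3$-cycle of arcs all of length $2\pi/3$ with bridges at its three vertices is not isometric, as a metric graph, to the Brady link, which has eight branch points. So the proposed $Y$ can never be of type~$A$, and the argument does not get off the ground.

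The valence obstruction is precisely why the paper abandons the torus template here. Its construction is direct: start from a single hexagonal face on $6$ vertices $\{1^\pm,2^\pm,3^\pm\}$, hand-build an $8$-regular multigraph on those $6$ vertices (two extra edges between $i^-$ and $i^+$, between $i^+$ and $(i+1)^+$, and between $i^-$ and $(i+1)^-$, for $i\bmod 3$), and then attach a short explicit list of hexagons and triangles chosen so that at each of the $6$ vertices the induced marking of the $8$-vertex link coincides with the Brady link. The $\IZ/3$-symmetry $\sigma$ and the reflection are used to cut the verification down to a single vertex. Your high-level framing -- angle lengths $\pi/3$ and $2\pi/3$ are compatible, so attempt a combinatorial realization and then invoke Gromov's link condition and Theorem~\ref{T - rigid} to see that $X'\not\simeq X_0$ -- is all sound; what is missing is the realization that the ``hexagonal torus plus involution'' template is structurally incapable of producing the required valence, and that one needs a bespoke compact quotient rather than a pinching-and-filling of a torus.
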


The construction is as follows. 
We begin with a single hexagon on a set of 6 vertices, which we  denote 
\[
\{1^+,2^-,3^+,1^-,2^+,3^-\},
\] 
and edges labelled from 1 to 6 in a cyclic order as follows.  

\begin{figure}[H]
\includegraphics[width=6cm]{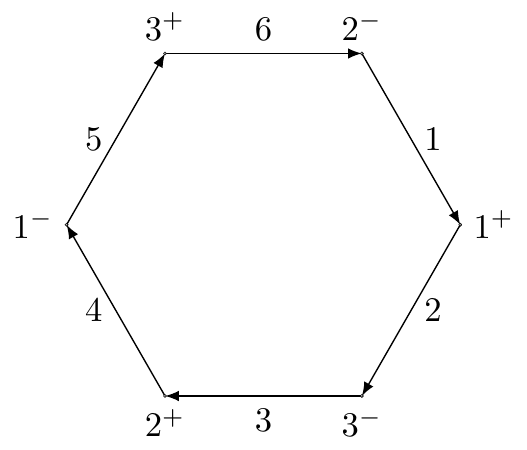}
\end{figure}

We shall realize these 6 vertices as the vertex set of a locally CAT(0) space of type $A$, containing the hexagon as a face. 

Consider additional edges between these vertices: 

\begin{enumerate}
\item[] two edges between $i^-$ and $i^+$
\item[] two edges between $i^+$ and $(i+1)^+$
\item[] two edges between $i^-$ and $(i+1)^-$
\end{enumerate}
(where  $i$ is an index modulo 3) organized and named as follows:

\begin{figure}[H]
\includegraphics[width=10cm]{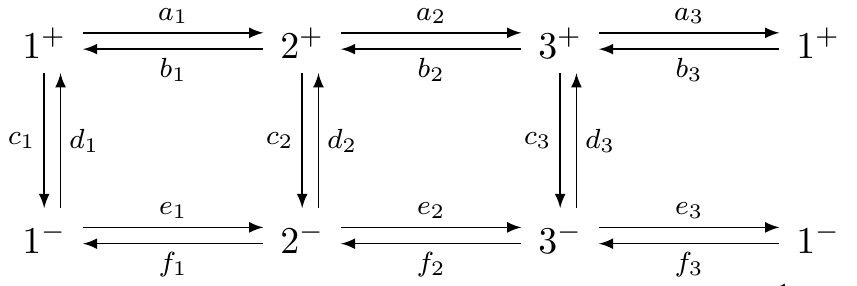}
\end{figure}

Together with the edges of the hexagon, this defines a regular graph of order 8. Note that this graph has a natural symmetry $\sigma$ of order 3 taking evert letter $l_i$ to the letter $l_{i+1}$ (modulo 3).

Consider the following hexagon and four triangles
\begin{enumerate}
\item[] $(a_1,c_2,f_1,e_1,d_2,b_1)$
\item[] $(d_1,a_1,4)$ \ \ $(f_1,d_1,1^-)$ 
\item[] $(b_1,c_1,4^-)$ \ \ $(c_1,e_1,1)$ 
\end{enumerate}
Together with their images under $\sigma$, this defines 3 triangles and 12 triangles. 
In addition to these triangle add the four triangles:
\begin{enumerate}
\item[] $(a_1,a_2,a_3)$\ \ $(b_1,b_2,b_3)$ 
\item[] $(e_1,e_2,e_3)$\ \ $(f_1,f_2,f_3)$
\end{enumerate}
This defines a 2-complex, whose fundamental group is $G'$ and universal cover $X'$. It is immediate to check that:

\begin{lemma}
The link of $X'$ is isometric to the link of $X_0$.
\end{lemma}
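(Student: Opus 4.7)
The plan is to exploit the order-3 symmetry $\sigma$ and an evident $+/-$ symmetry of the construction to reduce the verification to a single link computation, and then to match the resulting labelled graph with the Brady link by inspection.

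First, I would observe that the regularity of the graph of order $8$ together with the $\sigma$-symmetry implies that every vertex has the same ``local data'' at the combinatorial level: the set of edges incident to any $i^\pm$ is $\sigma$-translated from the set at $1^+$ or $1^-$, and the collection of faces listed (the hexagon, the twelve triangles obtained by $\sigma$-orbit, and the four ``triangles'' $(a_1,a_2,a_3)$, etc.) is $\sigma$-invariant. Additionally, swapping $+$ and $-$ together with the appropriate relabelling (interchanging the roles of $a \leftrightarrow e$, $b\leftrightarrow f$, $c \leftrightarrow d$, as suggested by the symmetric definition of the face list) is a symmetry of the 2-complex. It therefore suffices to compute the link at the single vertex $1^+$.

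Next, I would enumerate the corners at $1^+$. In the graph of order $8$, the vertex $1^+$ has exactly $8$ incident edges: the two hexagon edges numbered $6$ and $1$ (to $3^-$ and $2^-$), the two edges $a_1, a_2$ (to $2^+$), the two edges $e_1, e_2$ (to $3^+$), and the two edges $c_1, c_2$ (to $1^-$). The link at $1^+$ is then built from one link-edge per face containing $1^+$: the hexagon $(a_1,c_2,f_1,e_1,d_2,b_1)$ contributes a link-edge of length $2\pi/3$ between the link-vertices corresponding to $a_1$ and $c_2$; each of the triangles from the list (restricted to those meeting $1^+$) contributes a link-edge of length $\pi/3$; and each of $(a_1,a_2,a_3)$ and $(e_1,e_2,e_3)$ contributes a link-edge of length $\pi/3$ between $a_1$ and $a_2$, resp.\ $e_1$ and $e_2$. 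A careful but routine listing, using the $\sigma$-images of the four listed triangles to obtain the corners meeting $1^+$, yields the complete link graph.

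Finally, I would compare the resulting $8$-vertex graph with the Brady link as drawn in \S\ref{S - 2} (the graph from \cite[Fig.\ 6]{crisp2005classification}). The plan is to exhibit an explicit bijection between the eight link-vertices (the eight half-edges at $1^+$) and the eight vertices of the Brady link sending link-edges to link-edges and preserving angle lengths; by the link condition of CAT(0)-ness (which will be evident from the girth estimate built into the construction), this bijection is the required isometry. I expect the main obstacle to be purely bookkeeping: making sure that the angle around $1^+$ coming from the hexagon sits in the unique place in the Brady link carrying an angle $2\pi/3$, and that the three ``diagonal'' triangles $(a_1,a_2,a_3)$, etc., fill in exactly the link-edges that are missing from the $\sigma$-orbit of the four listed triangles. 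Once the bijection is written down, the verification that it is both graph and metric preserving is immediate, giving the desired isometry between the link of $X'$ at $1^+$ and the link of $X_0$, and hence, by the symmetry argument, at every vertex of $X'$.
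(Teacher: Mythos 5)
Your overall strategy matches the paper's: reduce to a single vertex via the order-$3$ symmetry $\sigma$ together with a $\pm$-swapping symmetry, then enumerate the corners contributing link edges, and finally compare the resulting $8$-vertex graph to the Brady link. The paper does exactly this, and (unlike your sketch) writes out all $12$ link edges explicitly: four of length $2\pi/3$ from hexagons, namely $(a_1,b_1)$, $(a_3,c_1)$, $(b_3,d_1)$, $(1,2)$, and eight of length $\pi/3$ from triangles, $(a_1,d_1)$, $(d_1,1)$, $(b_1,c_1)$, $(c_1,1)$, $(b_3,2)$, $(a_3,2)$, $(a_1,a_3)$, $(b_1,b_3)$.

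Where you would run into trouble if you carried out the bookkeeping as sketched is in your provisional edge labelling, which is internally inconsistent. The two parallel edges between $1^+$ and $2^+$ cannot both be called $a_1$ and $a_2$, because $(a_1,a_2,a_3)$ is listed as a \emph{triangle}: for that face to close up, $a_i$ must be the edge from $i^+$ to $(i+1)^+$ (indices mod $3$), so the corner of $(a_1,a_2,a_3)$ at $1^+$ joins $a_1$ to $a_3$, not $a_1$ to $a_2$. Similarly, the $e$-type and $f$-type edges join $-$ vertices to $-$ vertices, so neither $e_1$ nor $e_2$ can be incident to $1^+$, and $(e_1,e_2,e_3)$ does not contribute a corner there at all. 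Finally, your enumeration records only one hexagon corner, but every vertex lies on all four hexagons (the original one, the new hexagon $(a_1,c_2,f_1,e_1,d_2,b_1)$, and its two $\sigma$-images), so the link has four edges of length $2\pi/3$, not one. None of this changes the method, but the listing you described as ``routine'' will need these corrections before the comparison with the Brady link can go through.
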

\begin{proof}
Note that it is enough to check a single vertex, since $\sigma$ and the reflection with respect to the horizontal axis extend to the 2-complex.

We may index the vertex set of the link by $a_1,b_1a_3,b_3,c_1,d_1,1,2$, where the latter two numbers are associated with the initial hexagon. There are four hexagon edges: $(a_1,b_1)$, $(a_3,c_1)$, $(b_3,d_1)$, and $(1,2)$ (for the first hexagon). One can then draw the edge associated with triangles: these are $(a_1,d_1)$, $(d_1,1)$, $(b_1,c_1)$, $(c_1,1)$, from the images under $\sigma$: $(b_3,2)$, $(a_3,2)$, and finally, $(a_1,a_3)$ and $(b_1,b_3)$. 

It is not difficult to show that this graph is isometric to the link of $X_0$.     
\end{proof}

 We also note that: 

\begin{proposition}
 $\Aut(X')$ is vertex transitive.
\end{proposition}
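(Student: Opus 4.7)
The plan is to reduce vertex-transitivity of $\Aut(X')$ to the existence of sufficiently many symmetries of the compact base $Y'$, where $X'=\widetilde{Y'}$ and $G'=\pi_1(Y')$. Any automorphism of $Y'$ admits a lift to an automorphism of $X'$ normalizing $G'$, so it suffices to exhibit a subgroup of $\Aut(Y')$ acting transitively on the vertex set of $Y'$. A quick inspection of the construction shows that this vertex set consists of exactly the six hexagon vertices $1^+, 2^-, 3^+, 1^-, 2^+, 3^-$, since all the additional edges added in the construction have endpoints among these six points and no new vertex is introduced.

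Two symmetries of $Y'$ have already been invoked in the proof of the preceding lemma. First, the order-three symmetry $\sigma$ cyclically permuting the indices restricts on vertices to $i^{\pm}\mapsto (i+1)^{\pm}$, so it acts transitively on the three $+$-vertices and on the three $-$-vertices separately. Second, the horizontal reflection $\rho$ of the figure swaps $i^+$ with $i^-$ for each $i$. The subgroup $\langle \sigma,\rho\rangle \subset \Aut(Y')$ therefore acts transitively on the six-vertex set: the $\sigma$-orbit of $1^+$ covers $\{1^+,2^+,3^+\}$, and applying $\rho$ after $\sigma^k$ reaches the three $-$-vertices.

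To conclude, given two vertices $x,y$ of $X'$ with images $\bar x, \bar y$ in $Y'$, I would choose $g\in \langle \sigma,\rho\rangle$ with $g(\bar x)=\bar y$, a lift $\tilde g \in \Aut(X')$ of $g$, and an element $h\in G'\subset \Aut(X')$ with $h\tilde g(x)=y$ (which exists because $\tilde g(x)$ and $y$ project to the same point in $Y'$, hence lie in the same $G'$-orbit). The composition $h\tilde g\in \Aut(X')$ then takes $x$ to $y$.

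The main obstacle is verifying that $\rho$ really is a cellular automorphism of $Y'$: one needs an involution on the edge labels $\{a_i,b_i,c_i,d_i,e_i,f_i\}$ (presumably $a\leftrightarrow e$, $b\leftrightarrow f$, $c\leftrightarrow d$) together with the corresponding involution of the hexagon edges, under which the explicit list of bounding cycles in the construction is preserved. The paper uses this symmetry implicitly in the preceding lemma, and once the pairing is read off from the figure the verification reduces to a direct check that each triangle $(d_1,a_1,4)$, $(f_1,d_1,1^-)$, $(b_1,c_1,4^-)$, $(c_1,e_1,1)$, together with the four triangles $(a_1,a_2,a_3)$, $(b_1,b_2,b_3)$, $(e_1,e_2,e_3)$, $(f_1,f_2,f_3)$, has its $\rho$-image again in the list up to composition with a suitable power of $\sigma$.
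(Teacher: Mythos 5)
Your argument is correct and takes essentially the same route as the paper: the paper's proof is the one-line remark that vertex transitivity is ``part of the argument in the previous lemma,'' where the symmetries invoked are exactly $\sigma$ together with the horizontal reflection extending to the quotient $2$-complex, lifted to $X'$ via the deck group $G'$. You have spelled out the lifting step explicitly and correctly flagged the one verification the paper leaves implicit, namely that the reflection really is a label-preserving cellular automorphism of the base complex (the precise involution on $\{a_i,\dots,f_i\}$ and on the six vertices depends on the figure, but any reflection mixing the $+$ and $-$ vertices works once $\sigma$ is in hand).
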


This is part of the argument in the previous lemma.

\end{document}